\definecolor{labelkey}{rgb}{0.6,0,1}
\def\R{\hbox{\bf R}}
\def\1{1}
\def\I{{\cal I}}
\def\O{\Omega}
\newcommand{\ba}{\begin{eqnarray}}
\newcommand{\ea}{\end{eqnarray}}
\newtheorem{theo}{\bf Theorem}[section]
\newtheorem{pro}[theo]{\bf Proposition}
\newtheorem{cor}[theo]{\bf Corollary}
\newtheorem{defi}[theo]{\bf Definition}
\newtheorem{lemma}{\textbf{Lemma}}[section]
\newtheorem{remark}[theo]{\bf Remark}
\providecommand{\keywords}[1]{\textbf{\textit{Keywords: }} #1}
\renewcommand{\R}{{\mathbb R}} 
\newenvironment{Proofc}[1]{\smallskip\par\noindent\textsc{#1}\quad}%
  {\hfill$\Box$\bigskip\par}
\newcommand{\Usc}{{\textrm{USC}}}
\newcommand{\Lsc}{{\textrm{LSC}}}
\newcommand{\tf}{\mathtt{f}} 
\newcommand{\cS}{\mathcal{S}} 
\newcommand{\cH}{{\mathcal H}} 
\newcommand{\cI}{{\mathcal I}}
\newcommand{\cO}{{\mathcal O}}
\newcommand{\vN}{N}	
\newcommand{\vx}{x}      
\newcommand{\vR}{R}	
\newcommand{\vH}{H}     
\newcommand{\vV}{V}	
\newcommand{\vomega}{\omega}  
\title{\bf
A unified approach to the well-posedness  of some  non-Lambertian models in Shape-from-Shading theory
}
\author{Fabio Camilli\footnotemark[1]\ \and Silvia Tozza\footnotemark[2]}
\begin{document}

\maketitle

\renewcommand{\thefootnote}{\fnsymbol{footnote}}

\footnotetext[1]
{
 Dip. di Scienze di Base e Applicate per l'Ingegneria,  ``Sapienza" Universit{\`a}  di Roma,
 via Scarpa 16, 00161 Rome, Italy, ({\tt e-mail:camilli@sbai.uniroma1.it})
}

\footnotetext[2]
{
Dipartimento di Matematica,  ``Sapienza" Universit{\`a}  di Roma,
P.le Aldo Moro, 5 - 00185 Rome, Italy
({\tt e-mail: tozza@mat.uniroma1.it})
}

\renewcommand{\thefootnote}{\arabic{footnote}}

\abstract{In this paper we show that the introduction of an attenuation factor in the 
brightness equations relative to various perspective Shape-from-Shading models allows  to make the corresponding differential problems well-posed. We propose a unified approach based on the theory of viscosity solutions and we show that  the brightness
equations with the attenuation term admit a unique viscosity solution.
We also discuss in detail the possible boundary conditions that we can use for the Hamilton-Jacobi equations associated to these models.
}
\vskip12pt
\keywords{Shape-from-Shading, perspective model, viscosity solution, maximum principle, non-Lambertian models, stationary Hamilton-Jacobi equations.}


\section{Introduction}\label{sec:introduction}
Shape-from-Shading  (SfS) is the problem to compute a three dimensional shape of a surface from a  single gray-value image of it. The SfS model was formally introduced by Horn  who first formulated the problem as  a nonlinear first order partial differential equation (PDE) of Hamilton-Jacobi (HJ) type.
In general, this problem is described by the following so-called \emph{image irradiance equation} introduced by Bruss \cite{Bruss81}
\begin{equation}\label{general_irradiance_eq}
I(\vx) = R(\vN(\vx)).
\end{equation}
In this equation, the normalized brightness $I(\vx)$ of the given gray-value image  is put in relation with the reflectance map $R(\vN(\vx))$
 giving the value of the light reflection on the surface as a function of its orientation (i.e., of the normal $\vN(\vx)$).
Depending on how we describe the function $R$, different reflection models are determined.
The most studied model is the Lambertian one, a view independent model that depends only on the incident angle between the normal and the light source direction $\vomega$. Considering this model under an orthographic projection, that is when a single light source is placed at infinity in the direction of the unit vector $\vomega$
and  assuming uniform albedo equal to $1$ (i.e. the light is completely reflected by the surface)
the irradiance equation \eqref{general_irradiance_eq} becomes
\begin{equation}\label{intro_1}
    I(\vx)\sqrt{1+|\nabla u(\vx)|^2}+\vomega \cdot (\nabla u(\vx),-1)=0,
\end{equation}
where $u$ is the surface height (our unknown).
In general, nonlinear PDEs such as \eqref{intro_1}
do not admit smooth solutions and the proper notion of weak solution   is the viscosity  one (see \cite{Barles94,CIL,Bardi_Capuzzo_2008}).
Viscosity solutions  in the framework of SfS were considered for the first time in \cite{RT} and since then many papers have been
devoted to develop a proper existence and uniqueness theory for \eqref{intro_1} and related models.
It is  worthwhile to note that the orthographic  Lambertian model suffers of two main drawbacks:\\
 \emph{(i)}  From a modeling point of view, the Lambertian model does not take into account the viewer direction and supposes to deal with smooth objects, assumptions not suitable to deal with real-world data. Moreover, except for few application fields such as  the astronomical one, the light source and the camera are not far away from the surface (e.g. applications in medicine, security, etc.). \\
 \emph{(ii)} The image irradiance equation \eqref{intro_1}, even if completed with boundary conditions, does not admit in general a unique solution. In fact, due to the well-known concave/convex ambiguity in presence of \emph{singular  points}, i.e. points where
 the light direction is parallel to the normal to the surface, there exists an infinite family  of   viscosity solutions to equation  \eqref{intro_1}
 which are  all in between a minimal and a maximal solution. \par
 Concerning the first issue, in order to increase the complexity of the model, various extensions have been proposed.
In \cite{PF05,OD96},  the orthographic Lambertian  model is replaced by a \emph{perspective Lambertian one} (pinhole camera and light source at the optical center). In \cite{JTBBK13}, a more general setup,   which combines a spherical surface parametrization with the non-Lambertian Oren-Nayar reflectance model, is considered  obtaining a robust approach that allows to deal with an arbitrary position of the light source.
Moreover, models for non-Lambertian surfaces (like e.g. \cite{Phong75,Blinn77,ON94}) have been proposed in \cite{AF06,AF07,VBW08,JTBBK13}.
All these models, when formulated in terms of a differential equation, do not resolve the concave/convex ambiguity (see \cite{TF16} for an analysis on them). \par
In order to solve the non-uniqueness issue in presence of singular points, in \cite{PFC04,PF05} the authors  introduce   in the perspective Lambertian model an attenuation factor which takes into account the distance between the light source and the surface. This new term   makes
the model more reliable and thanks to it the notion of singular points does not make  sense anymore (even if some ambiguities in the model still appear \cite{BCDFV12}). They show that a Hamilton-Jacobi equation, obtained
via an appropriate change of variable  in the image irradiance equation,  admits a unique viscosity solution and, therefore, the uniqueness of the solution for the original equation follows. \par
In this paper,  starting from the idea in  \cite{PF05} related to the Lambertian model, we consider non-Lambertian models (Oren-Nayar \cite{ON94},
Phong \cite{Phong75}, Blinn-Phong \cite{Blinn77}) and we show that the  introduction of  an attenuation factor in the perspective case
 allows to overcome the concave/convex ambiguity for  all the corresponding brightness equations  and to obtain therefore
 the existence of at most one viscosity solution.  The uniqueness  property is obtained by checking the assumptions of the \emph{Maximum Principle for discontinuous viscosity solutions}  (see \cite{Barles94}). In this framework, boundary conditions  play  an important role,
hence we discuss  the various type of  them  which is possible to prescribe for the different models.
Interesting enough, all the  models  (included the  Lambertian one) can be studied in a unified approach
which allows to check the assumptions of the Maximum Principle in a rather straightforward way. Moreover, this approach  can  also provide
 a setup    to study other SfS  perspective models (see \cite{AF07}).\par   
 Many papers have been devoted to the approximation of the various irradiance equations arising in SfS theory (see the surveys \cite{ZTCS99,DFS08}), but only
very few of them discuss the convergence of the corresponding methods  and most of the results are of experimental type.
This fact is essentially due to the lack of a proper characterization of the viscosity  solution of the limit equation. The   Maximum Principle for discontinuous viscosity solutions we prove here is   a key ingredient to show  the convergence of numerical schemes for SfS equations via the by now classical method  proposed  in \cite{BS} (we will discuss this point elsewhere).\par
The paper is organized as follows: in Section \ref{sec:models}, we briefly describe the reflectance models and the derivation of the PDEs associated to them, arriving to a unified formulation with common properties shared by the different models.
In Section \ref{sec:visco_solutions_HJEs}, we focus our attention on viscosity solutions and the role of the various possible boundary conditions for HJ equations. After that, we move to prove the well-posedness of the Perspective SfS problem for the non-Lambertian models in Section \ref{sec:well-posedness}.
Finally, we report final comments in Section \ref{sec:conclusions} and  in Appendix \ref{App:AppendixA}  the  proofs of two lemmas used for  the  unified analysis.


\section{Reflectance Models and PDE formulation}\label{sec:models}
In this section we will briefly recall the reflectance models we will consider and we will describe the derivation of the corresponding HJ  equations, arriving to a unified formulation of them. 
For all the models, we consider as setup the perspective camera projection with one light source located at the optical center of the camera
and an attenuation term of the illumination due to the distance between the light source and the surface.\par
Let $\Omega$ be an open subset of $\mathbb{R}^2$ representing the image domain. Since the CCD sensors have finite size, let us assume that $\Omega$ is bounded.
As in \cite{PFC04,PF05,AF06,VBW08}, we represent the scene by a surface parametrized by   the function $\mathcal{S}: \overline{\Omega} \rightarrow \mathbb{R}^3$ defined by
\begin{equation*}\label{S_Surface}
\mathcal{S}(\vx) = \frac{f u(\vx)}{\sqrt{|\vx|^2 + f^2}} (\vx, -f)
\end{equation*}
where $f>0$ is the focal length of the camera and $u(\vx)$ is the unknown height of the surface.\\
The normal vector   at the point $\vx := (x_1,x_2)$ of $\mathcal{S}$ is given by
\begin{equation}\label{normal}
n(\vx) = \mathcal{S}_{x_1} \times \mathcal{S}_{x_2}= \big( f \, \nabla u(\vx) - \frac{f u(\vx)}{|\vx|^2 + f^2} \, \vx, \nabla u(\vx)\cdot \vx +  \frac{f u(\vx)}{|\vx|^2 + f^2} \, f \big)
\end{equation}
and the unit vector in the direction of the illumination is
\begin{equation*}\label{light_source}
 \vomega(\mathcal{S}(\vx))  = \frac{(-\vx,f)}{\sqrt{|\vx|^2 + f^2}} \, .
\end{equation*}
The unit normal vector $\vN(\vx)$ will be $\vN(\vx) = \frac{n(\vx)}{|n(\vx)|}$, where
\begin{eqnarray}\label{eq:norm_n}
|n(\vx)|
 =&\displaystyle \sqrt{ f^2|\nabla u(\vx)|^2 + (\nabla u(\vx) \cdot \vx)^2 + u(\vx)^2 \frac{f^2}{(f^2 + |\vx|^2)} }. \label{norm_of_n}
\end{eqnarray}

\subsection{Lambertian model (L--model)}\label{subsec:L_model}
The Lambertian model is the most common   reflectance model (\cite{PF05,OD96,HB86,HB89}).
The brightness equation for this model is described by
\begin{equation}\label{L_reflectance_model}
I(\vx)= \gamma_D(\vx) \frac{\cos(\theta _i)}{r^2}
\end{equation}
where $I(\vx)$ denotes the image intensity,
$\theta_{i}$ represents the angle between the unit normal to the surface $\vN(\vx)$ and the light source direction $\vomega(\mathcal{S}(\vx))$, $\gamma_D(\vx)$ is the diffuse albedo which describes the physical properties of the surface reflection, and $1/r^2$ is the attenuation term of the illumination due to the distance between the light source and the surface \cite{AF06}.
Hence, for a Lambertian surface, the measured light in the image depends only on the scalar product between $\vN$ and $\vomega$, independently on the position of the viewer.
In the sequel, we suppose uniform albedo and we put $\gamma_D(\vx)=1$, that is all the points of the surface reflect completely the light that hits them (we will not consider absorption phenomena).
So in this case the surface of the object has uniform properties of light reflection (see Fig. \ref{fig:lamb_reflection}).
\begin{figure}[h!]
\centering
\includegraphics[width=6.6cm]{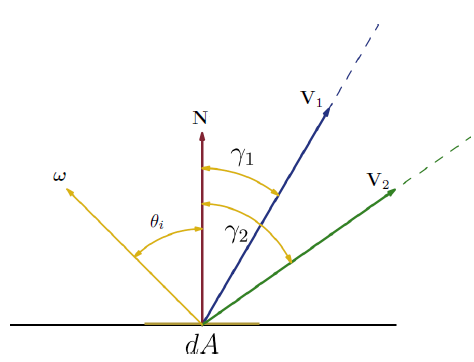}
\caption[A Lambertian surface diffuses the incident radiation independently from the angle between the surface normal and the direction of the observer.]{
A Lambertian surface diffuses the incident radiation independently from the angle between the surface normal $\vN$ and the direction of the observer.
Two different observers $\vV_1$ and $\vV_2$ do not detect any difference in radiance. The radiance is a function only of the angle $\theta_i$ between $\vomega$ and $\vN$.}
\label{fig:lamb_reflection}
\end{figure}
Under a perspective camera projection with a single light source located at the optical center of the camera and considering the attenuation term $1/r^2$, with $r = f u(\vx)$, we can associate to \eqref{L_reflectance_model} the following HJ equation
\begin{equation}\label{PDE_perspective_L}
I(\vx) = \frac{1}{f^2 u(\vx)^2} \displaystyle \frac{u(\vx) Q(\vx)}{\sqrt{f^2 |\nabla u(\vx)|^2 + (\nabla u(\vx) \cdot \vx)^2 + u(\vx)^2 Q(\vx)^2}}
\end{equation}
where
\begin{equation}\label{def_Q}
Q(\vx) := \displaystyle \frac{f}{\sqrt{f^2 + |\vx|^2}}.
\end{equation}
By defining
\begin{equation}\label{def_W}
W(\vx, p) := \displaystyle \frac{f^2 |p|^2 + (p \cdot \vx)^2}{Q^2},
\end{equation}
we can rewrite Eq. \eqref{PDE_perspective_L} as
\begin{equation*}
I(\vx) = \frac{1}{f^2 u(\vx)^2} \displaystyle \frac{u(\vx)}{\sqrt{W(\vx, \nabla u(\vx)) + u(\vx)^2}}.
\end{equation*}
Assuming that the surface $\cS$ is visible, i.e. in front of the optical center, then $u$ is a strictly positive function. Hence, by the change of variable $v(\vx) = \ln u(\vx)$, we  get the following HJ equation in the new variable $v$ (as in \cite{PFC04,PF05})
\begin{equation} \label{L_pde_v_PSFS}
 -e^{-2v(\vx)} + I(\vx) f^2 \sqrt{W(\vx, \nabla v(\vx)) + 1} = 0,
\end{equation}
to which we  associate the  Hamiltonian
\begin{equation} \label{eq:L_Hamiltonian}
H^L(\vx, r, p) = -e^{-2r} + I(\vx) f^2 \sqrt{W(\vx, p) + 1} \, .
\end{equation}
\subsection{Oren-Nayar model (ON--model)}\label{subsec:ON_model}
The Oren-Nayar reflectance model \cite{ON94,ON95} represents a rough surface as an aggregation of V-shaped cavities, each with Lambertian reflectance properties.
Assuming that there is a linear relation between the irradiance of the image and the image intensity, the   brightness equation  for the ON--model is given by
\begin{equation}
\label{ON_reflectance_model}
I(\vx)= \cos(\theta _i) (A + B \sin(\alpha) \tan (\beta )M(\varphi_i,\varphi_r)).
\end{equation}
In this model, 
$\theta_{i}$ represents the angle between the unit normal to the surface $\vN$ and the light source direction $\vomega$,
$\theta_{r}$ stands for the angle between $\vN$ and the observer direction $\vV$,
$\varphi_{i}$ is the angle between the projection of the light source direction $\vomega$ and the $x_1$
axis onto the $(x_1, x_2)$-plane,
$\varphi_{\mathrm r}$ denotes the angle between the projection of the observer direction $\vV$ and the $x_1$ axis
onto the $(x_1, x_2)$-plane, the two variables $\alpha$ and $\beta$ are given by
\begin{equation*}\label{def:ab}
\alpha = \max \left\{ \theta_{i}, \theta_{r} \right\} \hbox{ and } \beta = \min \left\{ \theta_{i}, \theta_{r} \right\}.
\end{equation*}
The nonnegative  constants $A$ and $B$  depend  on the statistics of the cavities via the roughness parameter $\sigma$. We  set $\sigma \in [0,\pi/2)$, interpreting $\sigma$ as the slope of the cavities, and
\begin{eqnarray*} \label{eq:A_B}
&& A = 1 - 0.5 \, \sigma^2 (\sigma^2 + 0.33)^{-1} \\
&& B = 0.45\sigma^2(\sigma^2 + 0.09)^{-1} \\
&& M(\varphi_i,\varphi_r) = \max\{0,\cos(\varphi_r -  \varphi_i)\}.
\end{eqnarray*}
Let us consider a perspective camera projection, with only one light source located at the optical center of the camera. As a consequence of this setup, we define $\theta := \theta_i = \theta_r = \alpha = \beta$. Adding the attenuation term $1/r^2$ of the illumination due to the distance between the light source and the surface as proposed in \cite{AF06}, the brightness equation becomes
\begin{equation}\label{ON_perspective}
    I(\vx) = \frac{A \cos(\theta) +  B \sin^2(\theta)}{r^2}.
\end{equation}
Since $\cos(\theta) = \vN(\vx) \cdot \vomega(\mathcal{S}(\vx))$, hence $\sin^2(\theta) = 1 - (\vN(\vx) \cdot  \vomega(\mathcal{S}(\vx)))^2$, and $r = f u(\vx)$,  the brightness equation \eqref{ON_perspective} becomes
\begin{eqnarray}\label{PDE_perspective_ON}
  I(\vx) f^2 u(\vx)^2 = &&  \displaystyle \frac{A u(\vx) Q(\vx)}{\sqrt{f^2 |\nabla u|^2 + (\nabla u \cdot \vx)^2 + u(\vx)^2 Q(\vx)^2}}  \nonumber \\
   && +  B \Big(1 -\displaystyle \frac{u(\vx)^2 Q(\vx)^2}{f^2 |\nabla u|^2 + (\nabla u \cdot \vx)^2 + u(\vx)^2 Q(\vx)^2}\Big)
\end{eqnarray}
with $Q(\vx)$ defined as in \eqref{def_Q}.  
By defining $W$ as in \eqref{def_W}, 
we can rewrite Eq. \eqref{PDE_perspective_ON} as
\begin{eqnarray*}\label{PDE_persp_con_W}
 I(\vx) f^2 u(\vx)^2 - \displaystyle \frac{A u(\vx)}{\sqrt{W(\vx,\nabla u(\vx)) + u(\vx)^2}}  - B (1 -\displaystyle \frac{u(\vx)^2 }{W(\vx,\nabla u(\vx)) + u(\vx)^2}) = 0
\end{eqnarray*}
or, equivalently,
\begin{eqnarray*}
 I(\vx) f^2 u(\vx)^2 (W (\vx,\nabla u(\vx)) + u(\vx)^2) - A u(\vx) (\sqrt{W(\vx,\nabla u(\vx)) + u(\vx)^2}) - B W(\vx,\nabla u(\vx)) = 0. \nonumber
\end{eqnarray*}
Since  $u(\vx) > 0, \, \forall \vx \in \overline \Omega$,  
by the change of variable $v(\vx) = \ln u(\vx)$
we arrive to the following PDE in $v$
\begin{equation*} %
 I f^2 e^{2v(\vx)} (W(\vx, \nabla v(\vx)) + 1) - A \sqrt{W(\vx, \nabla v(\vx)) + 1} - B W(\vx, \nabla v(\vx))  = 0,
\end{equation*}
which  can be written also as follow (Cf. Eq. (10) in \cite{AF06}):
\begin{equation} \label{ON_pde_v_PSFS}
 -e^{-2v(\vx)} + I(\vx) f^2 \frac{(W(\vx, \nabla v(\vx)) + 1)}{ A \sqrt{W(\vx, \nabla v(\vx)) + 1} + B W(\vx, \nabla v(\vx))} = 0,
\end{equation}
to which we associate the following Hamiltonian
\begin{equation}
  \label{eq:ON_Hamiltonian}
  H^{ON}(\vx, r, p)
  :=
  -  e^{-2r} + I(\vx) \, \tf^2
  \left[
      \dfrac{(W(x,p) +1)}{A \, \sqrt{W(x,p) +1} + B W(x,p) } \,
  \right].
\end{equation}
Note that the quantity $W$ is the same one defined in the L--model, see \eqref{def_W},  and this will be useful for the unified approach to the different models.


\subsection{Phong model (PH--model)}\label{subsec:PH_model}
The PH--model is an empirical model that was developed by Phong \cite{Phong75}.
This model introduces a specular component to the brightness function $I(\vx)$ which can be described in general as
\[  k_A I_A(\vx) + k_D I_D(\vx) + k_S I_S(\vx)\]
 where $I_A(\vx)$, $I_D(\vx)$ and $I_S(\vx)$ are the ambient, diffuse and specular light components, respectively, and $k_A$, $k_D$ and $k_S$ indicate the percentages of ambient, diffuse and specular components, respectively, such that their sum is equal to $1$.
 The specular light component $I_S(\vx)$ is given as a power of the cosine of the angle $\theta_s$ between the unit vectors $\vV$ and $\vR(\vx)$
with $R$ representing the reflection of the light $\vomega$ on the surface. Then, the brightness equation for the PH--model is
\begin{equation}
\label{eq:PH_brightness}
I(\vx) = k_A I_A(\vx) + k_D \gamma_D(\vx) (\cos \theta_i) + k_S \gamma_S(\vx) (\cos \theta_s)^{\alpha},
\end{equation}
where $\alpha$ expresses the specular reflection characteristics of a material, $\gamma_D(\vx)$ and $\gamma_S(\vx)$ represent the diffuse and specular albedo, respectively (we will set them to 1 in the following).\par
Using the same setup adopted for the previous models, that is a perspective camera projection with only one light source located at the optical center of the camera, we   obtain that the view direction and the light source direction are the same, hence   $\theta_s = 2 \theta_i$ (see \cite{VBW08}). By adding the light attenuation term as done before, the brightness equation \eqref{eq:PH_brightness} becomes
\begin{equation}
\label{eq:PH_reflectance_model}
I(\vx) = k_A I_A(\vx) + \frac{1}{r^2} k_D (\vN(\vx) \cdot \vomega(\mathcal{S}(\vx))) + k_S (2 (\vN(\vx) \cdot \vomega(\mathcal{S}(\vx)))^2 - 1 )^{\alpha},
\end{equation}
since $\cos \theta_i = \vN(\vx) \cdot \vomega(\mathcal{S}(\vx))$ and $\cos \theta_s = \cos 2\theta_i = 2(\cos \theta_i)^2 - 1 = 2 (\vN(\vx) \cdot \vomega(\mathcal{S}(\vx)))^2 -1$.\par
Following \cite{VBW08}, we derive a HJ equation associated to \eqref{eq:PH_reflectance_model}. If we use $|n(\vx)|$ as defined in \eqref{eq:norm_n} and $Q(\vx)$ as in \eqref{def_Q}, then \eqref{eq:PH_reflectance_model} is equivalent to
\begin{equation}\label{eq:PDE_perspective_PH}
I(\vx) = k_A I_A(\vx) + \frac{1}{f^2 u(\vx)^2} \Big( k_D \frac{u(\vx) Q(\vx)}{|n(\vx)|} + k_S (2 \frac{u^2 Q(\vx)^2}{|n(\vx)|^2} - 1 )^{\alpha} \Big).
\end{equation}
Since $u(\vx)$ is a strictly positive function, by   the  change of variable  $v(\vx) = \ln u(\vx)$
we arrive to the following HJ equation in $v$
\begin{equation}\label{PH_PDE_v_Vogel}
\begin{array}{cc}
\displaystyle \frac{(I(\vx) - k_A I_A(\vx)) f^2}{Q}   W^{VBW}(\vx, \nabla v(\vx)) - \displaystyle k_D e^{-2v(\vx)} \\
 -\displaystyle  \frac{k_S W^{VBW}(\vx,\nabla v(\vx))}{Q(\vx)} e^{-2v(\vx)} \Big( \frac{2 Q(\vx)^2}{W^{VBW}(\vx,\nabla v(\vx))^2} - 1 \Big)^{\alpha} = 0,
\end{array}
\end{equation}
where
\[W^{VBW}(\vx, p) := \sqrt{f^2 |	p|^2 + (p \cdot \vx)^2 + Q(\vx)^2},\]
as defined in \cite{VBW08}.
To our purpose, it is useful to write \eqref{PH_PDE_v_Vogel} with the same structure and functions that appear in the L-model  and in the ON-model. Hence, we rewrite \eqref{PH_PDE_v_Vogel} as follow
\begin{equation}\label{PH_PDE_v_PSFS}
-e^{-2v(\vx)} + (I(\vx) - k_A I_A(\vx)) f^2 \displaystyle \frac{\sqrt{W(\vx, \nabla v(\vx)) + 1}}{k_D + k_S \sqrt{W(\vx, \nabla v(\vx)) + 1} \, \Big(\frac{1-W(\vx, \nabla v(\vx))}{1+W(\vx, \nabla v(\vx))}\Big)^{\alpha}} = 0,
\end{equation}
where $W(\vx, \nabla v(\vx))$ is defined as in \eqref{def_W}. We associate to \eqref{PH_PDE_v_PSFS} the  following Hamiltonian
\begin{equation}  \label{eq:PH_Hamiltonian_old}
 H^{PH} (\vx,r,p) :=
 -e^{-2r} + (I(\vx) - k_A I_A(\vx)) f^2 \displaystyle \frac{\sqrt{W(\vx, p) + 1}}{k_D + k_S \sqrt{W(\vx, p) + 1} \, \Big(\frac{1-W(\vx, p)}{1+W(\vx, p)}\Big)^{\alpha}}.
\end{equation}
 From now on we assume that $\alpha$ is an integer  and $\alpha\ge 1$ (in \cite{Phong75}, $\alpha \in [1,10]$).
Note that  $H^{PH} (\vx,r,p)$ is not well defined  if $\alpha$ is irrational.
On the other side, as stated also in Vogel et al. \cite{VBW08}, since the term $\Big(\frac{1-W(\vx, p)}{1+W(\vx, p)}\Big)$ represents the cosine of the specular term, in the Phong model this cosine is replaced by zero if $\cos \theta_s = \Big(\frac{1-W(\vx, p)}{1+W(\vx, p)}\Big) < 0$ and we can rewrite \eqref{eq:PH_Hamiltonian_old} as follow:
\begin{eqnarray}  \label{eq:PH_Hamiltonian}
&& H^{PH}(\vx,r,p) := \nonumber \\
&& \left\{
\begin{array}{ll}
-e^{-2r} + (I(\vx) - k_A I_A(\vx)) f^2 \displaystyle \frac{\sqrt{W(\vx, p) + 1}}{k_D} & \textrm{if $W(\vx,p) \ge 1$}, \\
 -e^{-2r} + (I(\vx) - k_A I_A(\vx)) f^2 \displaystyle \frac{\sqrt{W(\vx, p) + 1}}{k_D + k_S \sqrt{W(\vx, p) + 1} \, \Big(\frac{1-W(\vx, p)}{1+W(\vx, p)}\Big)^{\alpha}} &\textrm{if $0 \le W(\vx, p) < 1$}.
\end{array}
\right .
\end{eqnarray}
\begin{remark}\label{PH_reduces_to_L}
Note that if $k_S=0$,  i.e. if there is no contribution of the specular part, then
 \[H^{PH}(x,r,p)=-e^{-2r} +\left(\frac{I(x)-k_A I_A(x)}{k_D}\right)f^2\sqrt{W(\vx, p) + 1}.\]
 Hence, in this case the PH--model reduces to   the L--model, up to a constant.
\end{remark}
\subsection{Blinn-Phong model (BP--model)}\label{subsec:BP_model}
A modification of the PH--model has been proposed by Blinn \cite{Blinn77} introducing an intermediate vector $\vH$, which bisects the angle between the unit vectors $\vomega$ and $\vV$.
If the surface is a perfect mirror, the light reaches the eye only if the surface normal $\vN$ is pointed halfway between the light  source direction $\vomega$ and the eye direction $\vV$. We will denote the direction of maximum highlight  by $\vH = \frac{\vomega + \vV}{|\vomega + \vV|}$.
For less than perfect mirrors, the specular component falls off slowly as the normal direction moves away from the specular direction. The cosine of the angle between $\vN$ and $\vH$ is used as a measure of the distance of a particular surface  from the maximum specular direction. The degree of sharpness  of the highlights is adjusted by taking this
cosine to some power (in \cite{Blinn77} it is stated that this power is typically $50$ or $60$).\\
For this model, the brightness equation is
\begin{equation}
\label{eq:BP_brightness}
I(\vx) = k_A I_A(\vx) + k_D \gamma_D(\vx) (\cos \theta_i) + k_S \gamma_S(\vx) (\cos \delta)^{c},
\end{equation}
where $c$ expresses a measure of shininess of the surface (we suppose $c\ge 1$) and $\delta$ is the angle between the unit normal $\vN$ and $\vH$.
As  for the PH--model, $\gamma_D(\vx)$ and $\gamma_S(\vx)$ represent the diffuse and specular albedo, respectively, and in what follows we will set them to 1.
Using the same setup adopted for the previous models, we   obtain that the view direction and the light source direction are the same.
By adding the light attenuation factor as done before, the brightness equation \eqref{eq:BP_brightness} becomes
\begin{equation*}
\label{eq:BP_reflectance_model}
I(\vx) = k_A I_A(\vx) + \frac{1}{r^2} k_D (\vN(\vx) \cdot \vomega(\mathcal{S}(\vx))) + k_S (\vN(\vx) \cdot \vomega(\mathcal{S}(\vx)))^{c},
\end{equation*}
since $\cos \theta_i = \vN(\vx) \cdot \vomega(\mathcal{S}(\vx))$ and $\cos \delta$ can be expressed as $\vN(\vx) \cdot \vomega(\mathcal{S}(\vx))$ under this setup.
%
By using \eqref{normal} and \eqref{eq:norm_n} for the definition of the unit normal $\vN(\vx)$,
we   arrive to the following  HJ equation
\begin{equation*}\label{eq:PDE_perspective_BP}
I(\vx) = k_A I_A(\vx) + \frac{1}{f^2 u(\vx)^2} \Big( k_D \frac{u(\vx) Q(\vx)}{|n(\vx)|} + k_S \Big(\frac{u(\vx) Q(\vx)}{|n(\vx)|}\Big)^{c} \Big),
\end{equation*}
with $|n(\vx)|$ defined as  in \eqref{eq:norm_n} and $Q(\vx)$ as in \eqref{def_Q}.
By using the  change of variable  $v(\vx) = \ln u(\vx)$, as done for the previous models, we obtain the following HJ equation in $v$
\begin{equation}\label{BP_PDE_v_old}
\displaystyle (I(\vx) - k_A I_A(\vx)) f^2 e^{2v} = k_D \frac{1}{\sqrt{W(\vx, \nabla v(\vx)) +1}} + k_S \Big( \frac{1}{\sqrt{W(\vx, \nabla v(\vx)) +1}} \Big)^c
\end{equation}
with the function $W(\vx, \nabla v(\vx))$ defined as in \eqref{def_W}. For our purpose, we will rewrite this equation as
\begin{equation}\label{BP_PDE_v_PSFS}
\displaystyle -e^{-2v(\vx)} +  (I(\vx) - k_A I_A(\vx)) f^2 \frac{(W(\vx, \nabla v(\vx)) +1)^{c/2}}{k_D (W(\vx, \nabla v(\vx)) +1)^{\frac{c-1}{2}} + k_S} = 0,
\end{equation}
to which we associate the   Hamiltonian
\begin{equation}  \label{eq:BP_Hamiltonian}
H^{BP}(\vx,r,p) := \displaystyle -e^{-2r} +  (I(\vx) - k_A I_A(\vx)) f^2 \frac{(W(\vx, p) +1)^{c/2}}{k_D (W(\vx,p) +1)^{\frac{c-1}{2}} + k_S}.
\end{equation}
\begin{remark}
For the Blinn-Phong model holds  an  observation similar to the one reported for the PH--model in  Remark \ref{PH_reduces_to_L} , i.e.
for  $k_S=0$  it reduces to the $L$-model, up to a constant.
\end{remark}
\subsection{Unified formulation and analysis} \label{sec:unified_formulation}
As we have seen before, 
the four different reflectance models considered in a common setup lead to
the study of  a  HJ equation of the form
\begin{equation}\label{HJ}
H(\vx,v(\vx), \nabla v(\vx)) = 0,  \quad \textrm{$\forall \vx\in\Omega$,}
\end{equation}
where    the Hamiltonian $H$ is given respectively by \eqref{eq:L_Hamiltonian}, \eqref{eq:ON_Hamiltonian}, \eqref{eq:PH_Hamiltonian}, \eqref{eq:BP_Hamiltonian}
and $v(\vx) = \ln u(\vx)$,  with $u(\vx)$ the  unknown height of the surface.  We   highlight some  properties of the general Hamiltonian
$H$ which are common for all the four cases. First of all, note that \eqref{HJ} can be rewritten as
\begin{equation}\label{HJ1}
   - e^{-2v} +\cH^M(x, \nabla v(\vx)) = 0
\end{equation}
where $M$ is the acronym of the models ($M= L, ON, PH, BP$).
For $W(x,p)$  defined as in \eqref{def_W},  $\cH^M(x,p)$ corresponds for each model to the following:
\begin{itemize}
  \item[] \emph{Lambertian model}
  \begin{align}
  &\cH^{L}(x,p) := I(x)f^2F_L(W(x,p)), \label{cH_L}\\
  & F_{L}(r) = \sqrt{r+1}.		\label{F_L}
  \end{align}
  \item[]  \emph{Oren-Nayar model}
  \begin{align}
& \cH^{ON}(x,p) = I(x)f^2F_{ON}(W(x,p)),	\label{cH_ON}\\
 &F_{ON}(r) = \frac{r+1}{A\sqrt{r+1}+Br}.		\label{F_ON}
  \end{align}
  \item[]  \emph{Phong model}
  \begin{align}
 & \cH^{PH}(x,p) = (I(x) - k_A I_A(x)) f^2 F_{PH}(W(x,p)), 	\label{cH_PH} \\
 & F_{PH}(r) =
         \left\{
    \begin{array}{ll}
    \displaystyle \frac{\sqrt{r+1}}{k_D} &\textrm{if $r \ge 1$} \\
    \displaystyle  \frac{(r+1)^{\alpha + 1/2}}{k_D (r+1)^{\alpha} + k_S (r+1)^{1/2} (1-r)^{\alpha}}  &\textrm{if $0 \le r < 1$}.
    \end{array}
    \right .  \label{F_PH}		
  \end{align}
  \item[]  \emph{Blinn-Phong model}
  \begin{align}
 & \cH^{BP}(x,p) = (I(x) - k_A I_A(x)) f^2 F_{BP}(W(x,p)),	\label{cH_BP}\\
  &F_{BP}(r) =  \frac{(r +1)^{c/2}}{k_D (r +1)^{\frac{c-1}{2}} + k_S}. \label{F_BP}
  \end{align}
\end{itemize}
We give some bounds on $W$ and the various functions $F_M$ above defined which will be useful for our desired unified analysis.
The proofs of these results are reported at the end of the paper in Appendix \ref{App:AppendixA}. 
\begin{lemma}\label{lemma1}
The following bounds hold:
\begin{align}
    &f^2 |p|^2\le W(x,p)\le C|p|^2		\label{hp:W_bounded}\\
    &|D_x W(x,p)|\le C  |p|^2 		\label{hp:Dx_W_bounded}\\
   &|D_p W(x,p)|\le C |p| 			\label{hp:Dp_W_bounded}
\end{align}
with $C$ independent of $(x,p)\in\overline \Omega\times \R^2$.
\end{lemma}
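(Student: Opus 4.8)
The plan is to prove the three estimates by direct elementary computation, using only the explicit formulas for $W$ and $Q$ together with the boundedness of $\Omega$. Since $\Omega$ is bounded, fix $R>0$ such that $|x|\le R$ for every $x\in\overline\Omega$ (one may take $R=\diam(\Omega)$ if $0\in\overline\Omega$, or any bound for $|x|$ otherwise). From \eqref{def_Q} one has $Q(x)^2=f^2/(f^2+|x|^2)$, hence
\begin{equation*}
\frac{1}{Q(x)^2}=\frac{f^2+|x|^2}{f^2},\qquad 1\le\frac{1}{Q(x)^2}\le\frac{f^2+R^2}{f^2}\quad\text{for all }x\in\overline\Omega,
\end{equation*}
so $1/Q^2$ is a smooth function of $x$, bounded and bounded below by a positive constant on $\overline\Omega$. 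This is the only place boundedness of $\Omega$ enters, and it is what makes all the bounds uniform in $x$.

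For \eqref{hp:W_bounded} I would write $W(x,p)=\bigl(f^2|p|^2+(p\cdot x)^2\bigr)/Q(x)^2$ and bound the numerator from below by $f^2|p|^2$ (discarding the nonnegative term $(p\cdot x)^2$) and from above by $(f^2+R^2)|p|^2$ via Cauchy--Schwarz, $(p\cdot x)^2\le|x|^2|p|^2\le R^2|p|^2$. Combining with the two-sided bound on $1/Q^2$ yields $f^2|p|^2\le W(x,p)\le \bigl((f^2+R^2)^2/f^2\bigr)|p|^2$, which is \eqref{hp:W_bounded}.

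For the derivative bounds I would factor $W=N\cdot G$ with $N(x,p)=f^2|p|^2+(p\cdot x)^2$ and $G(x)=1/Q(x)^2=(f^2+|x|^2)/f^2$, and apply the Leibniz rule. Here $D_xN=2(p\cdot x)\,p$ and $D_xG=2x/f^2$, so $|D_xN|\le 2R|p|^2$ and $|D_xG|\le 2R/f^2$; together with $N\le(f^2+R^2)|p|^2$ and $G\le(f^2+R^2)/f^2$ this gives $|D_xW|\le C|p|^2$, i.e. \eqref{hp:Dx_W_bounded}. Since $G$ does not depend on $p$, we have $D_pW=G\,D_pN$ with $D_pN=2f^2p+2(p\cdot x)x$, hence $|D_pN|\le 2(f^2+R^2)|p|$ and $|D_pW|\le C|p|$, i.e. \eqref{hp:Dp_W_bounded}. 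In each case the constant $C$ depends only on $f$ and $R$.

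I do not expect any genuine obstacle in this lemma: it is a bookkeeping exercise in quantitative estimates, and the only care required is tracking the dependence of the constants so that they are independent of $(x,p)$. The more delicate structural facts needed for the unified analysis — monotonicity, growth, and regularity of the profile functions $F_M$, in particular the matching of $F_{PH}$ across the threshold $r=1$ — are the content of the second lemma proved in Appendix~\ref{App:AppendixA}, and are not addressed here.
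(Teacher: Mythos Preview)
Your proof is correct and follows essentially the same route as the paper: a direct computation of $W$, $D_xW$ and $D_pW$ combined with the boundedness of $\overline\Omega$ to get uniform constants. The only cosmetic difference is that the paper obtains the upper bound in \eqref{hp:W_bounded} via the homogeneity $W(x,p)=|p|^2W(x,p/|p|)\le|p|^2\max_{|q|=1}W(x,q)$ rather than by your explicit Cauchy--Schwarz estimate, and it writes out $D_xW$, $D_pW$ in expanded form rather than via your Leibniz factorisation $W=N\cdot G$; these amount to the same elementary calculation.
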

\begin{lemma} \label{lemma2} \hfill\\
\begin{itemize}
 \item[(i)]For $F= F_{L},  \,F_{PH},\,F_{BP}$,    the function $F:[0,+\infty)\to \R$
is smooth, positive,  strictly increasing and satisfies
  \begin{align}
   \lim_{r\to+\infty} F(r) = +\infty,  \label{lim_F}\\
   0<F'(r)\le \frac{C}{\sqrt{r+1}}.  \label{Bound_der_F} 
  \end{align}
 \item[(ii)] The function $ F_{ON}:[0,+\infty)\to \R$ is smooth, positive, strictly increasing  if $A/2>B$  and satisfies
  \begin{align}
   \lim_{r\to+\infty} F_{ON}(r) = \frac{1}{B},	\label{lim_F_ON}\\
   0<F'_{ON}(r)\le \frac{C}{\sqrt{r+1}}. \label{Bound_der_F_ON}
  \end{align}
\end{itemize}
\end{lemma}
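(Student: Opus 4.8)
The plan is to treat all four functions by a single device: the substitution $s=\sqrt{r+1}$, which maps $[0,+\infty)$ bijectively onto $[1,+\infty)$ and turns each $F_M$ into an elementary rational function of $s$. Writing $F_M(r)=G_M(\sqrt{r+1})$, one reads off from \eqref{F_L}, \eqref{F_BP}, \eqref{F_ON} and the upper branch of \eqref{F_PH} that $G_L(s)=s$, $G_{BP}(s)=s^{c}/(k_D s^{c-1}+k_S)$, $G_{ON}(s)=s^{2}/(B s^{2}+A s-B)$, and $G_{PH}(s)=s/k_D$ for $s\ge\sqrt2$; the lower branch of $F_{PH}$, after dividing numerator and denominator of \eqref{F_PH} by $(r+1)^{\alpha}$, is most transparent in the variable $r$, namely $F_{PH}(r)=\sqrt{r+1}\,\big(k_D+k_S\,\psi(r)\big)^{-1}$ with $\psi(r):=(1-r)^{\alpha}(1+r)^{1/2-\alpha}$ for $0\le r<1$. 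In this form positivity and smoothness away from the junction are immediate: the denominators $Bs^{2}+As-B$, $k_D s^{c-1}+k_S$, $k_D+k_S\psi(r)$ are smooth and bounded below by a positive constant on the relevant interval --- using $A>\tfrac12$ (since $\sigma^{2}(\sigma^{2}+0.33)^{-1}<1$) and the standing assumption $k_D>0$ --- because $Bs^{2}+As-B\ge As\ge A$ and $k_D s^{c-1}+k_S\ge k_D$ for $s\ge1$ (here $c\ge1$), and $k_D+k_S\psi(r)\ge k_D$ since $\psi\ge0$ on $[0,1]$. For $F_{PH}$ one must additionally check the gluing at $r=1$: since $\alpha\ge1$ is an integer, $(1-r)^{\alpha}$ --- hence $\psi$ --- vanishes at $r=1$ together with its first $\alpha-1$ derivatives, so the two branches agree there at least continuously, indeed in $C^{\alpha-1}$; I will point out that for $\alpha=1$ the matching is only $C^{0}$, but $F_{PH}$ still has bounded one-sided derivatives and is globally Lipschitz, which is what is used in Section~\ref{sec:well-posedness}.

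For monotonicity, differentiating the rational expressions reduces the sign of $G_M'$ to that of a low-degree polynomial in $s$. One has $G_L'\equiv1$; next, $G_{BP}(s)=(k_D s^{-1}+k_S s^{-c})^{-1}$ and $k_D s^{-1}+k_S s^{-c}$ is strictly decreasing on $[1,+\infty)$ (since $c\ge1$), so $G_{BP}'>0$; a direct computation gives $G_{ON}'(s)=s(As-2B)\,(Bs^{2}+As-B)^{-2}$, which is strictly positive on $[1,+\infty)$ exactly when $As-2B>0$ there, i.e. when $A/2>B$ --- this is where the hypothesis of part (ii) is needed (and it genuinely restricts $\sigma$). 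For the Phong lower branch, $\psi'(r)/\psi(r)=-\alpha/(1-r)+(1/2-\alpha)/(1+r)<0$ on $[0,1)$ because $\alpha\ge1$, so $\psi$ is strictly decreasing there; hence in $F_{PH}(r)=\sqrt{r+1}\,(k_D+k_S\psi(r))^{-1}$ the numerator is strictly increasing and the positive denominator strictly decreasing, so $F_{PH}$ is strictly increasing on $[0,1)$, and together with the upper branch $\sqrt{r+1}/k_D$ and continuity at $r=1$ this yields monotonicity on all of $[0,+\infty)$. Since $s=\sqrt{r+1}$ is increasing, strict monotonicity in $s$ transfers to strict monotonicity in $r$ in the remaining three cases as well.

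Finally, the limits \eqref{lim_F} and \eqref{lim_F_ON} follow from the leading behaviour as $s\to+\infty$: $G_L$, $G_{PH}=s/k_D$ and $G_{BP}=(k_D s^{-1}+k_S s^{-c})^{-1}$ all tend to $+\infty$ (numerator of higher degree), whereas $G_{ON}=s^{2}(Bs^{2}+As-B)^{-1}\to 1/B$ provided $B>0$ (for $\sigma=0$, i.e. $B=0$, $F_{ON}=\sqrt{r+1}/A$ is the Lambertian case up to a constant). For the derivative estimates \eqref{Bound_der_F} and \eqref{Bound_der_F_ON} it suffices, via $F_M'(r)=\tfrac1{2s}\,G_M'(s)$, to bound $G_M'(s)\le C$ on $[1,+\infty)$: this is trivial for $G_L$; for $G_{ON}$, from $(Bs^{2}+As-B)^{2}\ge(As)^{2}$ and $s(As-2B)\le As^{2}$ one gets $G_{ON}'\le 1/A$; for $G_{BP}$, a short computation gives $G_{BP}'(s)=(k_D s^{2c-2}+c\,k_S s^{c-1})(k_D s^{c-1}+k_S)^{-2}$, and the two elementary bounds $(k_D s^{c-1}+k_S)^{2}\ge(k_D s^{c-1})^{2}$ and $(k_D s^{c-1}+k_S)^{2}\ge 2k_D k_S s^{c-1}$ yield $G_{BP}'\le(2+c)/(2k_D)$; for $F_{PH}$ the upper branch is again trivial, while on the compact interval $[0,1]$ the function $\psi$, hence $F_{PH}$, extends smoothly with bounded derivative, so a bound of the form $C/\sqrt{r+1}$ holds there too. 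Taking the maximum of the finitely many constants closes the argument. The one genuinely delicate point is the Phong case --- handling the junction $r=1$ and establishing monotonicity of the lower branch --- which is precisely why I isolate it through the identity $F_{PH}(r)=\sqrt{r+1}\big(k_D+k_S(1-r)^{\alpha}(1+r)^{1/2-\alpha}\big)^{-1}$; everything else is a one-line estimate.
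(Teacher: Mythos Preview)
Your proof is correct and takes a genuinely different route from the paper's. The paper differentiates each $F_M$ directly in the variable $r$ and simplifies by brute force; you instead pass to $s=\sqrt{r+1}$, which turns $F_L$, $F_{ON}$, $F_{BP}$ (and the upper branch of $F_{PH}$) into explicit rational functions of $s$, so that positivity of the denominators, monotonicity, and the uniform bound on $G_M'$ become short algebraic checks, after which the chain rule $F_M'(r)=G_M'(s)/(2s)$ delivers the $C/\sqrt{r+1}$ bound at once. For the Phong lower branch you exploit the factorisation $F_{PH}(r)=\sqrt{r+1}\,(k_D+k_S\psi(r))^{-1}$ with $\psi(r)=(1-r)^\alpha(1+r)^{1/2-\alpha}$, which makes monotonicity (via $\psi'<0$) and the junction analysis at $r=1$ transparent. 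The paper's version reaches the same endpoints through longer direct computations of $F'_M(r)$.

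Your approach also buys a correction the paper misses. You observe that for $\alpha=1$ the two branches of $F_{PH}$ match only in $C^0$ at $r=1$: since $\psi'(1)=-2^{-1/2}\neq0$, the left derivative carries an extra term $k_S/k_D^2$, so $F_{PH}$ is not $C^1$ there. The paper's proof asserts $\lim_{r\to1^-}F'_{PH}=\lim_{r\to1^+}F'_{PH}$, overlooking that the factor $(1-r)^{\alpha-1}$ in its own expression for $F'_{PH}$ survives when $\alpha=1$. As you point out, this does not affect the applications in Section~\ref{sec:well-posedness}: only the Lipschitz bound \eqref{Bound_der_F} is actually used (in checking (H2) and (H3)), and that holds with one-sided derivatives.
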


\section{Viscosity solutions of Hamilton-Jacobi equations} \label{sec:visco_solutions_HJEs}
In view   of the study of the general Hamiltonian \eqref{HJ} for the various reflectance models introduced in Section \ref{sec:models},
we recall some basic properties of the theory of viscosity solutions emphasizing the role of the boundary conditions
(we refer to \cite{Barles94} for a nice introduction to this theory).\\
In general, a Hamilton-Jacobi equation such as \eqref{HJ} does not admit a classical solution and the
right notion of weak solution  is  the one of viscosity solution. Viscosity solutions are typically Lipschitz continuous and they  can be
obtained as the limit of  regular solutions to second order problems via the so-called vanishing viscosity method. Moreover,
if a classical solution to the HJ equation exists, it coincides with the viscosity solution.
We introduce the definition of viscosity subsolution, supersolution and solution. It is useful to have the following notations
for a generic set $\cO\subset \R^2$
\begin{align*}
    \Usc(\cO)=\{\text{upper semicontinuous functions} \,u:\cO\to\R\}\\
     \Lsc(\cO)=\{\text{lower semicontinuous functions} \,u:\cO\to\R\}
\end{align*}
\begin{defi}\label{visco_sol}
A viscosity subsolution of \eqref{HJ} on $\O$ is a function $u\in\Usc(\O)$ such that
for any $\phi\in C^1(\O)$, if $x_0\in\O$ is a local maximum point  of $u-\phi$, one has
\[H(x_0, u(x_0),\nabla\phi(x_0))\le 0.\]
A viscosity supersolution of \eqref{HJ} on $\O$ is a function $u\in\Lsc(\O)$ such that
for any $\phi\in C^1(\O)$, if $x_0\in\O$ is a local minimum point  of $u-\phi$, one has
\[H(x_0, u(x_0),\nabla\phi(x_0))\ge 0.\]
Finally, $u$ is a viscosity solution of \eqref{HJ} if it is both a viscosity subsolution and a viscosity supersolution.
\end{defi}
\noindent Note that a viscosity solution of \eqref{HJ} is a continuous function in $\O$.
\subsection{Strong boundary conditions}\label{subsec:strong_BC}
In this framework we assume to know the value of the function $u$ on the boundary of the image domain $\O$, hence we consider the
Dirichlet problem:
\begin{equation}\label{Dirichlet}
\left\{
\begin{array}{ll}
H(\vx,u(\vx), \nabla u(\vx)) = 0,  & \textrm{$\forall \vx\in\Omega$,}\\
u(\vx) =g(\vx), & \textrm{$\forall \vx\in\partial\Omega$,}
\end{array}
\right.
\end{equation}
where $g$ is a real continuous function defined on $\partial \Omega$ and the  boundary condition is considered in pointwise  sense (\emph{boundary condition in strong sense}).
The following theorem ensures the uniqueness of the continuous viscosity solution to \eqref{Dirichlet}   (see   \cite[Theorem 3.3]{CIL} for more details).
\begin{theo}\label{theorem_weak_uniqueness}
Let $\Omega$ be  a bounded subset of $\R^2$ and $H: \Omega\times \R \times \R^2 \rightarrow \R$ be continuous and satisfies
\begin{equation}\label{H1}
    \begin{split}
     \text{for any $R>0$, $\exists \gamma_R>0$ s.t. $H(x,u,p)-H(x,v,p)\ge \gamma_R(u-v)$}\\
     \text{for all $x\in\Omega$, $-R\le v\le u\le R$ and $p\in\R^2$,}
    \end{split}	\tag{H1}
\end{equation}
\begin{equation}\label{H2}
    \begin{split}
   \text{$|H(x,u,p)-H(y,u,p)| \le m_R(|x-y|(1+|p|))$ for all $x,y\in \Omega$, }\\
   \text{$-R\le u\le R$, $p\in\R^2$  where $m_R(t)\to 0$ when $t\to 0$.}
    \end{split}	\tag{H2}
\end{equation}
Let $u\in \Usc(\overline\O)$ (respectively  $v\in \Lsc(\overline \O)$) be a subsolution (respectively, supersolution) of  \eqref{HJ}
in $\O$ and $u\le v$ on $\partial\O$. Then $u\le v$ in $\overline\O$.
\end{theo}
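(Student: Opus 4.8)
The statement is the classical comparison principle for viscosity solutions of first-order Hamilton--Jacobi equations (essentially \cite[Theorem 3.3]{CIL}); the plan is to run the \emph{doubling-of-variables} argument, which for a first-order equation requires no second-order machinery. Argue by contradiction and assume
\[
M := \sup_{\overline\Omega}(u-v) > 0 ,
\]
a finite number, attained, since $\overline\Omega$ is compact and $u-v$ is upper semicontinuous on it (as $u\in\Usc(\overline\Omega)$, $-v\in\Usc(\overline\Omega)$). For $\varepsilon>0$ introduce on $\overline\Omega\times\overline\Omega$ the penalized function
\[
\Phi_\varepsilon(x,y) := u(x) - v(y) - \frac{|x-y|^2}{2\varepsilon},
\]
which, being USC on a compact set, attains its maximum at some $(x_\varepsilon,y_\varepsilon)$. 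Comparing with the diagonal, $\Phi_\varepsilon(x_\varepsilon,y_\varepsilon)\ge \Phi_\varepsilon(x,x)=u(x)-v(x)$, yields $M_\varepsilon:=\Phi_\varepsilon(x_\varepsilon,y_\varepsilon)\ge M>0$ and in particular $u(x_\varepsilon)-v(y_\varepsilon)\ge M+\frac{|x_\varepsilon-y_\varepsilon|^2}{2\varepsilon}\ge M$.

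Next I would invoke the standard penalization lemma: along a subsequence $x_\varepsilon,y_\varepsilon\to\bar x$ for some $\bar x\in\overline\Omega$ with $u(\bar x)-v(\bar x)=M$, and $\frac{|x_\varepsilon-y_\varepsilon|^2}{\varepsilon}\to0$ (hence also $|x_\varepsilon-y_\varepsilon|\to0$). Since $M>0$ whereas $u\le v$ on $\partial\Omega$, the limit $\bar x$ cannot lie on $\partial\Omega$, so $\bar x\in\Omega$; consequently $x_\varepsilon,y_\varepsilon\in\Omega$ for all $\varepsilon$ small enough. This interior localization, which is precisely where the boundary inequality $u\le v$ on $\partial\Omega$ is used, is the only genuinely delicate step of the proof.

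Finally I would exploit the two equations. For fixed $y_\varepsilon$, the map $x\mapsto u(x)-\phi(x)$ with $\phi(x):=v(y_\varepsilon)+\frac{|x-y_\varepsilon|^2}{2\varepsilon}$ has an interior local maximum at $x_\varepsilon$, so the subsolution property gives, with $p_\varepsilon:=\frac{x_\varepsilon-y_\varepsilon}{\varepsilon}$,
\[
H(x_\varepsilon,u(x_\varepsilon),p_\varepsilon)\le 0 .
\]
Symmetrically, for fixed $x_\varepsilon$, the map $y\mapsto v(y)-\psi(y)$ with $\psi(y):=u(x_\varepsilon)-\frac{|x_\varepsilon-y|^2}{2\varepsilon}$ has an interior local minimum at $y_\varepsilon$, and $\nabla\psi(y_\varepsilon)=p_\varepsilon$, so the supersolution property gives $H(y_\varepsilon,v(y_\varepsilon),p_\varepsilon)\ge 0$. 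Subtracting and inserting the intermediate value $H(x_\varepsilon,v(y_\varepsilon),p_\varepsilon)$,
\[
0\ \ge\ \big[H(x_\varepsilon,u(x_\varepsilon),p_\varepsilon)-H(x_\varepsilon,v(y_\varepsilon),p_\varepsilon)\big]+\big[H(x_\varepsilon,v(y_\varepsilon),p_\varepsilon)-H(y_\varepsilon,v(y_\varepsilon),p_\varepsilon)\big].
\]
Choosing $R$ with $|u|,|v|\le R$ on $\overline\Omega$ and using $u(x_\varepsilon)\ge v(y_\varepsilon)$, assumption \eqref{H1} bounds the first bracket below by $\gamma_R\big(u(x_\varepsilon)-v(y_\varepsilon)\big)\ge\gamma_R M$, while assumption \eqref{H2} bounds the second bracket in modulus by $m_R\big(|x_\varepsilon-y_\varepsilon|(1+|p_\varepsilon|)\big)$; since $|x_\varepsilon-y_\varepsilon|\,|p_\varepsilon|=\frac{|x_\varepsilon-y_\varepsilon|^2}{\varepsilon}\to0$ and $|x_\varepsilon-y_\varepsilon|\to0$, the argument of $m_R$ tends to $0$ and so does that term. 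Letting $\varepsilon\to0$ we obtain $0\ge\gamma_R M>0$, a contradiction. Hence $M\le0$, i.e. $u\le v$ on $\overline\Omega$. $\Box$
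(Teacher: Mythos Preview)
Your argument is the standard doubling-of-variables proof and is correct; the paper itself does not supply a proof of this theorem but simply invokes \cite[Theorem~3.3]{CIL}, which is precisely the result you have reproduced. One cosmetic point: rather than asserting $|u|,|v|\le R$ on all of $\overline\Omega$ (which semicontinuity alone does not guarantee), it suffices to observe that $u(x_\varepsilon)\le\sup_{\overline\Omega}u<\infty$ and $v(y_\varepsilon)\ge\inf_{\overline\Omega}v>-\infty$, and then $u(x_\varepsilon)-v(y_\varepsilon)\ge M$ bounds both values into a common interval $[-R,R]$, which is all that \eqref{H1}--\eqref{H2} require.
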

The previous result  implies that there exists at most one viscosity solution $u\in C(\overline\O)$ to the Dirichlet problem \eqref{Dirichlet} and we will prove  that all the Hamiltonians  corresponding to the reflectance models introduced in Section \ref{sec:models} satisfy the  assumptions
\eqref{H1}-\eqref{H2}.
Note that the crucial point for the uniqueness is the assumption \eqref{H1}, which is in general not satisfied by the SfS models without attenuation factor and/or under a different setup, e.g. considering an orthographic projection instead of a perspective one.
However, Theorem \ref{theorem_weak_uniqueness} is not completely satisfying for the following reasons:
\begin{itemize}
\item[(i)] A Dirichlet boundary condition requires the exact value of the solution at the boundary of the image domain, an additional information which is not always available in the SfS framework, especially in real contexts.
\item[(ii)] It is well known that  boundary conditions for first order PDEs can be typically prescribed only on some part of the boundary
(active boundary).
Hence, imposing a Dirichlet condition on all the boundary as in  \eqref{Dirichlet} could lead  to an ill-posed problems, i.e.  problems for which there exists no viscosity solution
 (see \cite[cap. IV]{Barles94})  and in this case the corresponding numerical schemes return inconsistent or inexact  results.
\item[(iii)] Many algorithms have been suggested for the SfS problem and a large part of them are based on the approximation of the   partial differential equations arising in the various models (see the surveys \cite{ZTCS99,DFS08}). Convergence and stability analysis    are important points
    for the reliability of a numerical approximation, but only a very few papers analyze the  theoretical properties of various proposed algorithms and most of the results are of empirical nature.\\	
It is worthwhile recall that there is a huge literature about the approximation of viscosity solutions (see e.g. the book by Falcone and Ferretti \cite{FF}) and a well established technique to prove the convergence of a numerical scheme is the Barles-Souganidis' approach \cite{BS}: besides some natural properties of the scheme (stability, consistency, monotonicity), a key ingredient for  this technique is the \emph{Maximum Principle for discontinuous viscosity solution},    which in particular requires the formulation in a weak (viscosity) sense of the  boundary conditions for the PDE to be approximated.
Since the Dirichlet boundary condition in \eqref{Dirichlet} is formulated in strong sense, i.e. pointwise, Theorem \ref{theorem_weak_uniqueness} is not sufficient for applying the Barles-Souganidis' approach and we need a different comparison result.
\end{itemize}
Motivated by the previous remarks, we are going to introduce the notion of boundary conditions in weak (viscosity) sense.
\subsection{Weak boundary conditions}\label{subsec:weak_BC}
In this section we consider a more general boundary value problem of the form
\begin{equation}\label{general_boundary}
\left\{
\begin{array}{ll}
H(x,u(x), \nabla u(x)) = 0,  & \textrm{$\forall x\in\Omega$,}\\
B(x,u(x), \nabla u(x))=0, & \textrm{$\forall x\in\partial\Omega$.}
\end{array}
\right.
\end{equation}
The operator $B:\partial\O\times\R\times\R^2\to\R$ represents the boundary condition. For example, the Dirichlet boundary condition in \eqref{Dirichlet} arises from the choice $B(x,r,p)=r-g(x)$; the Neumann boundary condition arises if $B(x,r,p)=N(x)\cdot p-g(x)$ where $N(x)$ is the outward unit normal to $x\in\partial\O$;
the so-called state constraints condition corresponds to $B(x,r,p)=r-g(x)$ with $g(x)\equiv-\infty$. We introduce the notion of (discontinuous) viscosity solution for \eqref{general_boundary}. The main difference with Definition \ref{visco_sol} is that now the boundary condition is incorporated in the definition (\emph{boundary condition in weak sense}).
\begin{defi}\label{discvisco_sol}
A viscosity subsolution of \eqref{general_boundary}  is a function $u\in\Usc(\overline\O)$ such that
for any $\phi\in C^1(\overline\O)$, if $x_0$ is a local maximum point  of $u-\phi$, one has
\begin{align*}
 &H(x_0, u(x_0),\nabla\phi(x_0))\le 0, &\text{if $x\in  \O$},\\
  &\min[H(x_0, u(x_0),\nabla\phi(x_0)),\,B(x_0u(x_0), \nabla\phi(x_0))]\le 0, & \text{if $x\in \partial \O$.}
\end{align*}
A viscosity supersolution of  \eqref{general_boundary}   is a function $u\in\Lsc(\overline\O)$ such that
for any $\phi\in C^1(\overline\O)$, if $x_0$ is a local minimum point  of $u-\phi$, one has
\begin{align*}
 &H(x_0, u(x_0),\nabla\phi(x_0))\ge 0, &\text{if $x\in  \O$},\\
  &\max[H(x_0, u(x_0),\nabla\phi(x_0)),\,B(x_0u(x_0), \nabla\phi(x_0))]\ge 0,  &\text{if $x\in \partial \O$.}
\end{align*}
Finally, $u$ is a viscosity solution of \eqref{general_boundary} if it is both a viscosity subsolution and a viscosity supersolution.
\end{defi}
\noindent Note that a viscosity solution of \eqref{general_boundary} is a continuous function in $\overline \O$.
We can now state  the \emph{Maximum Principle for discontinuous viscosity solutions}. We first consider the case of a
Dirichlet boundary condition  (see \cite[Theorem 4.4]{Barles94}).
\begin{theo}\label{theorem_strong_uniqueness}
Let $\Omega$ be a  bounded subset of $\R^2$  with $\partial\O$ of class $W^{2,\infty}$, $g:\partial\O\to \R$ a continuous
 function and $H: \Omega\times \R \times \R^2 \rightarrow \R$
satisfies  (H1)-(H2). Moreover, there exists a neighborhood $\Gamma$ of $\partial \Omega$ in $\R^2$ such that
\begin{equation}\label{H3}
    \begin{split}
   \text{for all $0<R<+\infty$, there exists $m_R(t)\to 0$ when $t\to 0$ such that} \\
   \text{$|H(x,u,p)-H(x,u,q)| \le m_R(|p-q|))$, for all $x \in\Gamma$, $-R\le u\le R$, $\, p,q\in\R^2$,             }
    \end{split}	\tag{H3}
\end{equation}
\begin{equation}\label{H4}
    \begin{split}
    \text{for all $0<R< +\infty$, there exists $C_R>0$ such that} \\
   \text{ $H(x,u,p+\lambda n(x))\le 0 \Rightarrow \lambda\le C_R(1+|p|)$ for all $(x,u,p)\in\Gamma\times[-R,R]\times\R^2$,}
    \end{split} 	\tag{H4}
\end{equation}
\begin{equation}\label{H5}
    \begin{split}
    \text{for all $R_1,R_2 \in \R^+, H(x,u,p - \lambda n(x)) \rightarrow +\infty \qquad \lambda \rightarrow +\infty$, } \\
   \text{ uniformly for $(x,u,p) \in\Gamma \times [-R_1,R_2] \times B(0,R_2)$.}
    \end{split} 	\tag{H5}
\end{equation}
Let $u\in \Usc(\overline\O)$ (respectively  $v\in \Lsc(\overline \O)$) be a subsolution (respectively, supersolution) of  \eqref{general_boundary}
with $B(x,r,p)=r-g(x)$. Then $u\le v$ in $\O$.
\end{theo}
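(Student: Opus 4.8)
\proof Theorem~\ref{theorem_strong_uniqueness} is the comparison principle for the Dirichlet problem with the boundary condition read in the weak sense of Definition~\ref{discvisco_sol} (this is \cite[Theorem 4.4]{Barles94}); the plan is to reproduce its proof, i.e.\ the classical doubling‑of‑variables argument, isolating the boundary contribution, which is the only genuinely new point with respect to Theorem~\ref{theorem_weak_uniqueness}. I would argue by contradiction, assuming $M:=\sup_{\overline\Omega}(u-v)>0$; since $u$ is u.s.c., $v$ is l.s.c. and $\overline\Omega$ is compact, this supremum is attained at some $\bar z\in\overline\Omega$. If $\bar z\in\Omega$, the interior doubling of variables — carried out exactly as in the proof of Theorem~\ref{theorem_weak_uniqueness} and using only (H1)--(H2) — already yields a contradiction, so I may assume $\bar z\in\partial\Omega$ and work inside the neighbourhood $\Gamma$ of $\partial\Omega$ where (H3)--(H5) hold. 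Using $\partial\Omega\in W^{2,\infty}$, I would flatten $\partial\Omega$ near $\bar z$ and fix a $W^{2,\infty}$ function $d\ge 0$ vanishing exactly on $\partial\Omega\cap\Gamma$, with $|\nabla d|=1$ and $\nabla d=-n$ on $\partial\Omega$, $n$ being the outward unit normal (smoothly extended to $\Gamma$).

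Next I would double the variables with a boundary layer: for $\varepsilon>0$, maximize over $\overline\Omega\times\overline\Omega$ a functional of the form
\[
\Phi_\varepsilon(x,y)=u(x)-v(y)-\frac{|x-y|^2}{2\varepsilon}-|x-\bar z|^4-\chi_\varepsilon(x,y),
\]
where $|x-\bar z|^4$ pins the maximizers near $\bar z$ and $\chi_\varepsilon$ is a correction built from $d$ (and/or from a shift of the quadratic term along $n(\bar z)$) designed both to keep a maximizing pair $(\bar x_\varepsilon,\bar y_\varepsilon)$ inside $\Gamma$ and to give the gradients of the relevant test functions at $\bar x_\varepsilon$ and at $\bar y_\varepsilon$ a controlled component in the normal direction. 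The standard estimates then give $\bar x_\varepsilon,\bar y_\varepsilon\to\bar z$, $|\bar x_\varepsilon-\bar y_\varepsilon|^2/\varepsilon\to 0$ and $u(\bar x_\varepsilon)-v(\bar y_\varepsilon)\to M$; in particular, setting $p_\varepsilon:=(\bar x_\varepsilon-\bar y_\varepsilon)/\varepsilon$, one has $|\bar x_\varepsilon-\bar y_\varepsilon|(1+|p_\varepsilon|)\to 0$.

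Finally I would write the sub- and supersolution inequalities of Definition~\ref{discvisco_sol} at $\bar x_\varepsilon$ and $\bar y_\varepsilon$, distinguishing whether these points lie in $\Omega$ or on $\partial\Omega$. When both are interior, one subtracts $H(\bar x_\varepsilon,u(\bar x_\varepsilon),\cdot)\le 0$ and $H(\bar y_\varepsilon,v(\bar y_\varepsilon),\cdot)\ge 0$: (H3) discards the gradient perturbation coming from $\chi_\varepsilon$, (H2) absorbs the $x$-variation through $|\bar x_\varepsilon-\bar y_\varepsilon|(1+|p_\varepsilon|)\to 0$, and (H1) then contradicts $u(\bar x_\varepsilon)-v(\bar y_\varepsilon)\to M>0$. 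When a point lies on $\partial\Omega$, its weak boundary inequality is used instead: by construction $\chi_\varepsilon$ forces the relevant test gradient to have a large normal component, so on the subsolution side (H4) makes the interior Hamiltonian strictly positive, whence the $\min$ in Definition~\ref{discvisco_sol} forces $u(\bar x_\varepsilon)\le g(\bar x_\varepsilon)$, while on the supersolution side (H5) drives the interior Hamiltonian to $+\infty$, which together with the penalization pushes the corresponding maximizer back into $\Omega$ or forces $v(\bar y_\varepsilon)\ge g(\bar y_\varepsilon)$; in the remaining case $u(\bar x_\varepsilon)-v(\bar y_\varepsilon)\le g(\bar x_\varepsilon)-g(\bar y_\varepsilon)\to 0$ by continuity of $g$ and $\bar x_\varepsilon,\bar y_\varepsilon\to\bar z$, again contradicting $M>0$. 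In every case a contradiction is reached, so $M\le 0$, i.e.\ $u\le v$ in $\Omega$. The hard part, and the only ingredient not already present in Theorem~\ref{theorem_weak_uniqueness}, is the construction and tuning of $\chi_\varepsilon$ so that simultaneously the maximizing pair stays in $\Gamma$ and converges to $\bar z$, the normal components of the test gradients have the right sign and blow up fast enough for (H4)--(H5) to bite, and the extra gradient it introduces is controlled by the local uniform continuity of $H$ in $p$ furnished by (H3); this is exactly where $\partial\Omega\in W^{2,\infty}$ and the structural conditions (H3)--(H5) enter, and the details can be found in \cite[Chapter 4]{Barles94}. \hfill$\Box$
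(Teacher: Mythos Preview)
The paper does not give its own proof of this theorem: it simply quotes it as \cite[Theorem 4.4]{Barles94} and uses it as a black box, so there is no argument in the paper to compare against. Your sketch reproduces exactly the Barles argument the paper is citing (doubling of variables, boundary-layer correction built from the signed distance, use of (H3) to absorb the gradient correction, (H4)--(H5) to rule out the boundary alternatives in the weak formulation), and in that sense your approach \emph{is} the paper's approach, only spelled out rather than referenced.
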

Even if the comparison between subsolution and supersolution holds only in $\O$ and not in all $\overline\O$, the previous result implies the uniqueness
of the viscosity solution in $\overline\O$, i.e. if $u_1, u_2\in C(\overline \O)$ are two viscosity solutions of \eqref{general_boundary} then $u_1=u_2$ in $\overline\O$.
Moreover,  Theorem \ref{theorem_strong_uniqueness} is the type of result needed  to prove the convergence of approximation schemes via the Barles-Souganidis' method.\\
Concerning the other boundary conditions which are typically considered in the SfS framework, for the validity of the corresponding Maximum Principle    it is sufficient to assume
\begin{itemize}
  \item[(i)]  for the \textbf{State Constraints  condition} $B(x,r,p)=r-g(x)$ with $g(x)\equiv-\infty$, that $H$ is continuous and satisfies
  (H1)-(H4). Note that in this case the boundary condition reduces to
 \begin{equation}\label{state_constraints}
 H(x,u,\nabla u)\ge 0 \qquad\text{on $\partial \O$}
 \end{equation}
  in viscosity sense and, therefore, it requires no additional information on the solution at the boundary.
\item[(ii)] for the \textbf{Neumann   condition}  $B(x,r,p)=N(x)\cdot p-g(x)$, that the function $g$ is continuous on $\partial \O$ and $H$ is continuous and satisfies
  (H1)-(H3). In this case the Maximum Principle holds in $\overline \O$, i.e.  $u\le v$ in $\overline \O$ in Theorem \ref{theorem_strong_uniqueness}.
\end{itemize}

\section{Well-posedness of the PSfS problem}\label{sec:well-posedness}
Aim of this section is to show that the SfS Hamiltonians $H^L, H^{ON}, H^{PH}, H^{BP}$, introduced in Section \ref{sec:models},
completed with appropriate (either strong or weak) boundary conditions, give raise to well-posed problems (either \eqref{Dirichlet} or \eqref{general_boundary}), 
i.e. problems for which the uniqueness of the viscosity solution holds.
In this section we assume that
\begin{equation}\label{H_lipsch}
\begin{split}
&\text{$\cI(x)$ is Lipschitz continuous in $\overline \Omega$ and there exists}\\
&\text{a neighborhood $\Gamma$ of $\partial \Omega$   such that  $\cI(x)\ge \delta>0$ in $\Gamma$},
\end{split}
\end{equation}
where
\begin{equation}\label{H_reflectance}
 \cI(x)=\left\{
\begin{array}{ll}
 I(x)           \qquad&\text{if $H=H^L, H^{ON}$,}\\[4pt]
 I(x)- k_A I_A(x) &\text{if $H=H^{PH}, H^{BP}$.}
  \end{array}
  \right.
\end{equation}
\subsection{Validation of assumptions for strong  boundary conditions}
We start now to prove that the assumptions \eqref{H1}-\eqref{H2} previously defined in Section \ref{subsec:strong_BC} hold for our Hamiltonians.
\begin{pro}\label{prop}
All the SfS Hamiltonians $H^L, H^{ON}, H^{PH}, H^{BP}$ defined in \eqref{eq:L_Hamiltonian}, \eqref{eq:ON_Hamiltonian}, \eqref{eq:PH_Hamiltonian}, \eqref{eq:BP_Hamiltonian} satisfy the assumptions (H1)-(H2).
\end{pro}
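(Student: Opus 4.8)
The plan is to write every Hamiltonian in the common form $H^M(x,r,p) = -e^{-2r} + \cI(x)f^2 F_M(W(x,p))$ given by \eqref{HJ1} and \eqref{cH_L}--\eqref{F_BP}, and then to verify \eqref{H1}--\eqref{H2} once, using only the structural bounds of Lemmas \ref{lemma1} and \ref{lemma2} together with \eqref{H_lipsch}. For \eqref{H1}, observe that the $r$-dependence of $H^M$ sits entirely in the term $-e^{-2r}$, which is strictly increasing with derivative $2e^{-2r}$; since the other term does not depend on $r$, for $-R\le v\le u\le R$ one has $H^M(x,u,p)-H^M(x,v,p) = e^{-2v}-e^{-2u} \ge 2e^{-2R}(u-v)$ by the mean value theorem (or convexity), so \eqref{H1} holds with $\gamma_R = 2e^{-2R}$, uniformly in $x$ and $p$. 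This is the easy half and is genuinely model-independent.

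For \eqref{H2}, I would fix $R>0$, take $x,y\in\Omega$ and $|u|\le R$, $p\in\R^2$, and estimate
\[
|H^M(x,u,p)-H^M(y,u,p)| \le f^2\,|\cI(x)-\cI(y)|\,F_M(W(x,p)) + f^2\,\cI(y)\,|F_M(W(x,p))-F_M(W(y,p))|.
\]
The first term is controlled using the Lipschitz continuity of $\cI$ from \eqref{H_lipsch} and the growth $F_M(W(x,p))\le C\sqrt{W(x,p)+1}\le C(1+|p|)$, which follows from the upper bound $W(x,p)\le C|p|^2$ in \eqref{hp:W_bounded} together with $F_M(r)\le F_M(0)+Cr\cdot\frac{1}{\sqrt{r+1}}\cdot\frac{1}{r}\cdots$ — more cleanly, integrating \eqref{Bound_der_F}/\eqref{Bound_der_F_ON} gives $F_M(r)\le F_M(0) + 2C\sqrt{r+1}$, hence $F_M(W(x,p))\le C'(1+|p|)$. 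So the first term is $\le C|x-y|(1+|p|)$, of the required form. For the second term, I would write $F_M(W(x,p))-F_M(W(y,p)) = F_M'(\xi)\,(W(x,p)-W(y,p))$ for some $\xi$ between $W(x,p)$ and $W(y,p)$, bound $|W(x,p)-W(y,p)|\le |D_xW|\,|x-y|\le C|x-y|\,|p|^2$ by \eqref{hp:Dx_W_bounded}, and bound $|F_M'(\xi)|\le C/\sqrt{\xi+1}$ by \eqref{Bound_der_F}/\eqref{Bound_der_F_ON}. The delicate point is that $\sqrt{\xi+1}$ must absorb one power of $|p|^2$: using $f^2|p|^2\le W(x,p)$ from \eqref{hp:W_bounded}, both $W(x,p)$ and $W(y,p)$ — hence $\xi$ — are $\ge f^2|p|^2$, so $1/\sqrt{\xi+1}\le 1/\sqrt{f^2|p|^2+1}\le C/(1+|p|)$, giving $|F_M(W(x,p))-F_M(W(y,p))|\le C|x-y|\,|p|^2/(1+|p|)\le C|x-y|(1+|p|)$. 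Since $\cI(y)$ is bounded on $\overline\Omega$, the second term is also of the form $C|x-y|(1+|p|)$, and taking $m_R(t)=Ct$ (affine, hence vanishing at $0$) closes \eqref{H2}.

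The one genuine subtlety, and the step I expect to need the most care, is the Phong model: $F_{PH}$ in \eqref{F_PH} is defined piecewise with the two branches meeting at $r=1$, so I must check that $F_{PH}$ is globally $C^1$ (or at least Lipschitz with the stated derivative bound) across $r=1$ — this is exactly what Lemma \ref{lemma2}(i) asserts, so I would simply invoke it, but I should note that the mean value estimate for the second term of \eqref{H2} requires this regularity (one could alternatively bypass differentiability by using that $F_{PH}$ is monotone and Lipschitz on each branch with a uniform constant, and that continuity at $r=1$ glues the two Lipschitz estimates). For Oren-Nayar one uses \eqref{lim_F_ON}--\eqref{Bound_der_F_ON} instead, under the standing assumption $A/2>B$ from Lemma \ref{lemma2}(ii); note that \eqref{H1} does not see $F_{ON}$ at all, so no monotonicity of $F_{ON}$ is needed there. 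Finally, continuity of each $H^M$ on $\Omega\times\R\times\R^2$ is immediate from continuity of $\cI$, $W$, and $F_M$ (the latter continuous at $r=1$ in the Phong case), which completes the verification of the hypotheses of Theorem \ref{theorem_weak_uniqueness} for all four models. $\Box$
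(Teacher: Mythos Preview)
Your proposal is correct and follows essentially the same route as the paper: both verify \eqref{H1} via the mean value theorem on $-e^{-2r}$ with $\gamma_R=2e^{-2R}$, and both handle \eqref{H2} by the same two-term splitting $|\cI(x)-\cI(y)|F_M(W(x,p))+\cI(y)|F_M(W(x,p))-F_M(W(y,p))|$, invoking Lemmas \ref{lemma1} and \ref{lemma2} for the bounds. The only cosmetic difference is that the paper bounds the second term by differentiating the composite map $x\mapsto F_M(W(x,p))$ directly (chain rule $F_M'(W(x,p))D_xW(x,p)$), whereas you apply the mean value theorem in the argument of $F_M$ and then use the lower bound $\xi\ge f^2|p|^2$ to absorb one factor of $|p|$; both arguments yield the same $C|x-y|(1+|p|)$ estimate, and your version is in fact more explicit about how the cancellation occurs.
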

\begin{proof}\\
\noindent {\bf Validation of hypothesis (H1)}\\
Recalling that all the Hamiltonians   $H^{L}$, $H^{ON}$, $H^{PH}$ and $H^{BP}$
can be written in the form $H(x,u,p)= -e^{-2u} + \cH^M(x,p)$ for an appropriate function $\cH^M(x,p)$, see \eqref{HJ1}, then for $u,v$
such that $-R \le v \le u \le R$, we have
\begin{eqnarray*}
H(x,u,p) - H(x,v,p) &=&  -e^{-2u} + \cH^M(x,p) + e^{-2v} - \cH^M(x,p) \\
&=&  -e^{-2u} + e^{-2v}\underset{\text{for some $c \in [v,u]$}}{=}2(u-v) e^{-2c} \\
&\geq&   2(u-v) e^{-2R}= \gamma_R(u-v).
\end{eqnarray*}
 Hence, all the Hamiltonians satisfy assumption (H1) with $\gamma_R = 2 e^{-2R} > 0$.\\
\noindent {\bf Validation of hypothesis (H2)}\\
For $H(x,u,p)$ equal to either $H^{L}$,$H^{ON}$, $H^{PH}$ or $H^{BP}$, by \eqref{HJ1} we can write
\begin{equation}
|H(x,u,p)-H(y,u,p)|= |\cH^M(x ,p)-\cH^M(y ,p)|,
\end{equation}
where $M$ is the acronym of the models ($M = L,ON,PH,BP$).
Recalling \eqref{H_lipsch},
with $\cI$ defined as in \eqref{H_reflectance}, we can write
\begin{align*}
|\cH^M(x,p)-\cH^M(y,p)|=\cI(x)f^2 F_M(W(x,p))-\cI(y)f^2F_M(W(y,p))\le \\|\cI(x)-\cI(y)|f^2F_M(W(x,p))+f^2
|\cI(y)||F_M(W(x,p))-F_M(W(y,p))|\le\\ f^2\|D\I\|_\infty|x-y|F_M(W(x,p))+f^2\|\cI\|_\infty|F_M(W(x,p))-F_M(W(y,p))| .
\end{align*}
Since $ D_x F_M(W(x,p)) =F'_M(W(x,p))D_x W(x,p)$,
we use
\eqref{hp:W_bounded}, \eqref{hp:Dx_W_bounded} and \eqref{Bound_der_F} for $\cH^{L}$,
\eqref{hp:Dx_W_bounded},  \eqref{Bound_der_F_ON} for $\cH^{ON}$,
\eqref{hp:Dx_W_bounded}, \eqref{Bound_der_F} for $\cH^{PH}$ and $\cH^{BP}$ 
to  obtain \eqref{H2} with $C$ depending on $\|\cI\|_\infty$ and $\|D\cI\|_\infty$.
\end{proof}\qed \\
As an immediate corollary of the previous Proposition \ref{prop}  and   Theorem \ref{theorem_weak_uniqueness}, we get
\begin{cor}
The Dirichlet  problem \eqref{Dirichlet} for $H$ given by  $H^L, H^{ON}, H^{PH}  or \, H^{BP}$
admits at most one viscosity solution.
\end{cor}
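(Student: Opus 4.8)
The plan is simply to combine the two ingredients just established. First I would observe that, by Proposition \ref{prop}, each of the Hamiltonians $H^L$, $H^{ON}$, $H^{PH}$, $H^{BP}$ satisfies the structural conditions (H1)-(H2), and that each is moreover continuous on $\Omega\times\R\times\R^2$: this is immediate for $H^L$, $H^{ON}$, $H^{BP}$ since $W(x,p)$ is smooth, the corresponding $F_M$ is smooth on $[0,+\infty)$ by Lemma \ref{lemma2}, $\cI$ is (Lipschitz, hence) continuous by \eqref{H_lipsch}, and $r\mapsto -e^{-2r}$ is smooth; for $H^{PH}$ one only has to check that the two branches in \eqref{eq:PH_Hamiltonian} agree at $W(x,p)=1$, which they do because the factor $(1-r)^\alpha$ vanishes at $r=1$, so the denominator of the lower branch reduces there to $k_D(r+1)^\alpha$ and the two expressions both equal $\sqrt{r+1}/k_D$. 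Thus the hypotheses of Theorem \ref{theorem_weak_uniqueness} are met, $\Omega$ being bounded as assumed throughout.

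Then I would run the usual exchange-of-roles argument. Let $u_1,u_2\in C(\overline\Omega)$ be two viscosity solutions of the Dirichlet problem \eqref{Dirichlet} for one of these Hamiltonians. By Definition \ref{visco_sol}, $u_1$ is in particular a subsolution and $u_2$ a supersolution of \eqref{HJ} in $\Omega$, and on $\partial\Omega$ one has $u_1=g=u_2$, so in particular $u_1\le u_2$ on $\partial\Omega$. Theorem \ref{theorem_weak_uniqueness} then yields $u_1\le u_2$ in $\overline\Omega$; interchanging the roles of $u_1$ and $u_2$ gives $u_2\le u_1$ in $\overline\Omega$, whence $u_1\equiv u_2$. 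This shows that \eqref{Dirichlet} admits at most one continuous viscosity solution for each of the four models. There is no genuine obstacle here — the corollary is a formal consequence of Proposition \ref{prop} and Theorem \ref{theorem_weak_uniqueness} — the only point deserving a line of care being the continuity of $H^{PH}$ across the interface $W=1$ noted above (and, if one wants the full regularity asserted in Lemma \ref{lemma2}, the matching of the derivatives of $F_{PH}$ there, which is checked in the Appendix).
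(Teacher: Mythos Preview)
Your proposal is correct and matches exactly the paper's approach: the corollary is stated there as an immediate consequence of Proposition~\ref{prop} (verifying (H1)--(H2)) together with Theorem~\ref{theorem_weak_uniqueness}, with no further argument given. Your additional remarks on the continuity of $H^{PH}$ at $W=1$ and the explicit exchange-of-roles step simply flesh out what the paper leaves implicit.
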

\subsection{Validation of assumptions for weak  boundary conditions}
Let us see now the proof of the assumptions (H3)-(H5) introduced in Section \ref{subsec:weak_BC}.
\begin{pro} \label{prop2}
We have
\begin{itemize}
  \item[(i)] The SfS Hamiltonians $H^L,H^{PH}, H^{BP}$ defined in \eqref{eq:L_Hamiltonian},  \eqref{eq:PH_Hamiltonian}, \eqref{eq:BP_Hamiltonian}, satisfy the assumptions (H3)-(H5).
  \item[(ii)]  The SfS Hamiltonian $H^{ON}$ defined  in  \eqref{eq:ON_Hamiltonian} satisfies (H3).
\end{itemize}
\end{pro}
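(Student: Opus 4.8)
The plan is to read all three conditions off the unified representation $H(x,u,p)=-e^{-2u}+\cI(x)f^2F_M(W(x,p))$ from \eqref{HJ1}, so that the analytic work is entirely absorbed by Lemma \ref{lemma1} (the bounds on $W$), Lemma \ref{lemma2} (monotonicity, limit, and the derivative bound for the scalar profiles $F_M$) and \eqref{H_lipsch} (which supplies $\|\cI\|_\infty<\infty$ on $\overline\Omega$ and $\cI\ge\delta>0$ on the boundary neighborhood $\Gamma$). With these at hand each of (H3), (H4), (H5) reduces to a short one-variable argument about $F_M$ composed with the quadratic form $W(x,\cdot)$. I do not expect a genuine obstacle; the only things to watch are the piecewise definition \eqref{F_PH} of $F_{PH}$ and the point at which the argument is designed to fail for $F_{ON}$, which is exactly why the statement claims only (H3) in case (ii).

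For (H3) I would first note that $-e^{-2u}$ does not depend on $p$, so $|H(x,u,p)-H(x,u,q)|=\cI(x)f^2|F_M(W(x,p))-F_M(W(x,q))|$, and it suffices to bound the $p$-Lipschitz constant of $p\mapsto F_M(W(x,p))$ uniformly in $x\in\Gamma$. Its gradient is $F_M'(W(x,p))\,D_pW(x,p)$, and here the decay $F_M'(r)\le C/\sqrt{r+1}$ from \eqref{Bound_der_F}, \eqref{Bound_der_F_ON} precisely offsets the linear growth $|D_pW(x,p)|\le C|p|$ from \eqref{hp:Dp_W_bounded}, since by \eqref{hp:W_bounded} we have $|p|/\sqrt{W(x,p)+1}\le |p|/\sqrt{f^2|p|^2+1}\le 1/f$. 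Integrating along the segment joining $q$ to $p$ then gives $|F_M(W(x,p))-F_M(W(x,q))|\le C|p-q|$, i.e. (H3) with $m_R(t)=C\|\cI\|_\infty f^2\,t$. This is insensitive to which of the four profiles is used, so it proves the (H3) part of both (i) and (ii); for $F_{PH}$ I would just remark that \eqref{Bound_der_F} together with the $C^1$ (indeed smooth) regularity asserted in Lemma \ref{lemma2} already covers the junction $r=1$ of the two branches.

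For (H4), now only for $H^L,H^{PH},H^{BP}$, I would argue as follows: if $H(x,u,p+\lambda n(x))\le 0$ with $(x,u,p)\in\Gamma\times[-R,R]\times\R^2$ (the case $\lambda<0$ is trivial), then $\cI(x)f^2F_M(W(x,p+\lambda n(x)))\le e^{-2u}\le e^{2R}$, whence $F_M(W(x,p+\lambda n(x)))\le e^{2R}/(\delta f^2)=:K_R$. By Lemma \ref{lemma2}(i) the profile $F_M$ is strictly increasing with $F_M(r)\to+\infty$, hence invertible with $F_M^{-1}(K_R)<\infty$; therefore $W(x,p+\lambda n(x))\le F_M^{-1}(K_R)$, and since $W\ge f^2|\cdot|^2$ and $|n(x)|=1$ this yields $\lambda\le |p+\lambda n(x)|+|p|\le f^{-1}\sqrt{F_M^{-1}(K_R)}+|p|\le C_R(1+|p|)$. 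For (H5), again for $H^L,H^{PH},H^{BP}$: given $R_1,R_2>0$ and $(x,u,p)\in\Gamma\times[-R_1,R_2]\times B(0,R_2)$, for $\lambda>R_2$ we have $|p-\lambda n(x)|\ge\lambda-R_2$, so $W(x,p-\lambda n(x))\ge f^2(\lambda-R_2)^2$ by \eqref{hp:W_bounded}, and by monotonicity $H(x,u,p-\lambda n(x))\ge -e^{2R_1}+\delta f^2F_M(f^2(\lambda-R_2)^2)$, which tends to $+\infty$ uniformly in $(x,u,p)$ thanks to \eqref{lim_F}.

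Finally, part (ii) needs no extra work, since only (H3) is asserted for $H^{ON}$ and that was already obtained in the common computation above. I would close by making explicit \emph{why} (H4) and (H5) are omitted for $H^{ON}$: by \eqref{lim_F_ON} the profile $F_{ON}$ is bounded by $1/B$, so as soon as $e^{2R}/(\delta f^2)\ge 1/B$ the inequality $F_{ON}(W)\le K_R$ gives no control on $W$ (so the (H4) argument genuinely collapses), and similarly $H^{ON}(x,u,p-\lambda n(x))$ stays bounded as $\lambda\to+\infty$ (so (H5) genuinely fails). In short, the delicate material is confined to Lemmas \ref{lemma1}--\ref{lemma2}; the proposition itself is bookkeeping of the two branches of $F_{PH}$ and of the sign and monotonicity information on $F_M$, and the only mild nuisance I anticipate is keeping track of the regime $W\ge 1$ versus $0\le W<1$ in the Phong case.
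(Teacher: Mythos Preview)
Your proof is correct and follows essentially the same route as the paper: both reduce (H3) to the gradient bound $|F_M'(W)\,D_pW|$ via Lemmas~\ref{lemma1}--\ref{lemma2}, both obtain (H4) by inverting the strictly increasing, unbounded profile $F_M$ to bound $W(x,p+\lambda n(x))$ and then use the lower bound $W\ge f^2|\cdot|^2$, and both note that (H4)--(H5) fail for $H^{ON}$ precisely because $F_{ON}$ is bounded. The only cosmetic difference is that for (H5) you argue once, uniformly, from $F_M(r)\to+\infty$ and monotonicity, whereas the paper repeats an explicit computation for $H^{BP}$ separately; your version is slightly cleaner but not materially different.
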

\begin{proof}\hfill\\
\noindent {\bf Validation of hypothesis (H3)}\\
For $H(x,u,p)$ equal to either  $H^{L}$, $H^{ON}$, $H^{PH}$ or $H^{BP}$, we have
\begin{align*}
|D_pH(x,u,p) |= |D_p\cH^M(x, p)|=I(x)f^2|D_pF_M(W(x,p))|\\= f^2\|\cI\|_\infty |F'_M(W(x,p))D_p W(x,p)| \, .
\end{align*}
Hence, by \eqref{hp:Dp_W_bounded} and \eqref{H_lipsch}, we get \eqref{H3} with $m_R(t)=Ct$ for any $R\in (0,+\infty)$. \\
%
\noindent {\bf Validation of hypothesis (H4)}\\
For $H(x,u,p)$ equal to $H^{L}$ and $F$
equal to $F_{L}$, we have
\[
H(x,u,p+\lambda n(x))\le 0\Rightarrow F(W(x,p+\lambda n(x)))\le \frac{e^{2R}}{\delta f^2}
\]
for any $u\in (-R,R)$,   $x\in \Gamma$ and $p\in \R^2$. Recalling \eqref{hp:W_bounded} and since $F$ is invertible from
 $[0,+\infty)$ to $[0,+\infty)$, we get
\[|p+\lambda n(x)|\le \sqrt{\frac{1}{f^2}F^{-1}\left(\frac{e^{2R}}{\delta f^2}\right)}=C_R \,. \]
Therefore, we get \eqref{H4} for the Lambertian Hamiltonian $H^L$. The previous argument works exactly in the same way
for  $H(x,u,p)$ equal to either $H^{PH}$  or $H^{BP}$  since the corresponding functions $F_{PH}$ and  $F_{BP}$ satisfy the same properties
 of $F_L$ (see Lemma \ref{lemma2}(i)).
 This is not true for the Hamiltonian $H^{ON}$ defined in  \eqref{eq:ON_Hamiltonian} since $F_{ON}$ is bounded (see Lemma \ref{lemma2}(ii)). \\
\noindent {\bf Validation of hypothesis (H5)}\\
For $H(x,u,p)$ equal to $H^{L}$ and $F$ equal to $F_{L}$, we have
\begin{eqnarray}
H(x,u,p-\lambda n(x)) &\ge& e^{-2R} + \delta f^2 F(W(x,p-\lambda n(x))) \nonumber \\
&=& e^{-2R} + \delta f^2 \sqrt{W(x,p-\lambda n(x)) +1} \nonumber \\
&\ge& e^{-2R} + \delta f^2 \sqrt{f^2 |p-\lambda n(x)|^2} \nonumber \\
&=& e^{-2R} + \delta f^2 f |p-\lambda n(x)|
\end{eqnarray}
that goes to $+\infty$ for $\lambda \rightarrow +\infty$. Hence, we get \eqref{H5} for  the Lambertian Hamiltonian \eqref{eq:L_Hamiltonian}.\\
The previous argument applies also   to the Phong Hamiltonian  \eqref{eq:PH_Hamiltonian}  since for $W(x,p) \ge 1$ the behavior of $H^{PH}$ is as the one of $H^L$. \\
For the BP--model, considering the associated Hamiltonian $H^{BP}$ defined in \eqref{eq:BP_Hamiltonian} we have
\begin{eqnarray}
H^{BP}(x,u,p-\lambda n(x)) &\ge& e^{-2R} + \delta f^2 F^{BP}(W(x,p-\lambda n(x))) \nonumber \\
&=& e^{-2R} + \delta f^2 \frac{(W(x,p-\lambda n(x)) +1)^{\frac{c}{2}}}{k_D (W(x,p-\lambda n(x)) +1)^{\frac{c-1}{2}} +k_S} \nonumber \\
&\ge& e^{-2R} + \delta f^2 \frac{(W(x,p-\lambda n(x)) +1)^{\frac{c}{2}}}{(k_D+k_S) (W(x,p-\lambda n(x)) +1)^{\frac{c-1}{2}}} \nonumber \\
&=& e^{-2R} + \delta f^2 \frac{\sqrt{W(x,p-\lambda n(x)) +1}}{(k_D+k_S)} \nonumber \\
&\ge& e^{-2R} + \delta f^2 \frac{\sqrt{f^2 |p-\lambda n(x)|^2}}{(k_D+k_S)} \nonumber \\
&=& e^{-2R} + \delta f^2 \frac{f |p-\lambda n(x)|}{(k_D+k_S)}
\end{eqnarray}
that goes to $+\infty$ for $\lambda \rightarrow +\infty$. Hence, we get \eqref{H5} for $H^{BP}$.
\end{proof}\qed \\
As a consequence of the previous Proposition \ref{prop2}  and   Theorem \ref{theorem_strong_uniqueness}, we get the following
\begin{cor} \label{SfS_strong_uniqueness}
We have
\begin{itemize}
 \item[(i)] For $H$ given by  $H^L,  H^{PH}$  or $H^{BP}$, the boundary value problem \eqref{general_boundary}
with   either Dirichlet, Neumann or state constraints boundary conditions in viscosity sense satisfies
 \emph{the Maximum Principle for discontinuous viscosity solutions} and therefore it admits at most one viscosity solution.
\item[(ii)] For $H$ given by  $H^{ON}$, the boundary value problem \eqref{general_boundary}
with     Neumann boundary condition in viscosity sense satisfies
  \emph{the Maximum Principle for discontinuous viscosity solutions} and therefore it admits at most one viscosity solution.
\end{itemize}
\end{cor}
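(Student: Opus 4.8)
The plan is simply to assemble the pieces already in place. By Proposition \ref{prop} all four Hamiltonians $H^L, H^{ON}, H^{PH}, H^{BP}$ satisfy \eqref{H1}--\eqref{H2}, and by Proposition \ref{prop2} the Hamiltonians $H^L, H^{PH}, H^{BP}$ satisfy moreover \eqref{H3}--\eqref{H5} on a neighbourhood $\Gamma$ of $\partial\O$, while $H^{ON}$ satisfies only \eqref{H3}. First I would record that each of these Hamiltonians is continuous on $\overline\O\times\R\times\R^2$ --- this is immediate from the explicit expressions \eqref{cH_L}--\eqref{F_BP}, the smoothness of the profiles $F_M$ from Lemma \ref{lemma2}, and the continuity of $\cI$ assumed in \eqref{H_lipsch} --- so that the hypotheses of the comparison results recalled in Section \ref{subsec:weak_BC} are meaningful for them.

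For part (i), working under the standing regularity $\partial\O$ of class $W^{2,\infty}$ and with $g\in C(\partial\O)$, I would invoke Theorem \ref{theorem_strong_uniqueness} directly in the Dirichlet case (all of \eqref{H1}--\eqref{H5} being available), the variant requiring only \eqref{H1}--\eqref{H4} in the state-constraints case ($g\equiv-\infty$), and the variant requiring only \eqref{H1}--\eqref{H3} in the Neumann case $B(x,r,p)=N(x)\cdot p - g(x)$; in all three cases this yields $u\le v$ on $\O$ (respectively on $\overline\O$ for the Neumann condition) whenever $u\in\Usc(\overline\O)$ is a subsolution and $v\in\Lsc(\overline\O)$ a supersolution of \eqref{general_boundary}, i.e. the Maximum Principle for discontinuous viscosity solutions. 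For part (ii), since only \eqref{H1}--\eqref{H3} hold for $H^{ON}$, the sole boundary condition among those listed to which the general theory applies is the Neumann one, and I would conclude exactly as in the Neumann sub-case of (i).

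In both parts, to pass from the Maximum Principle to "at most one viscosity solution" I would take two viscosity solutions $u_1,u_2\in C(\overline\O)$ of \eqref{general_boundary}, apply the comparison with the roles of subsolution and supersolution exchanged to get $u_1\le u_2$ and $u_2\le u_1$ on $\O$, and hence $u_1=u_2$ on $\overline\O$ by continuity (or already on $\overline\O$ in the Neumann case). There is no genuine analytic obstacle here, since the structural work has all been carried out in Propositions \ref{prop} and \ref{prop2}; the only point needing care is the bookkeeping of matching each boundary condition to the minimal set of hypotheses it requires, and in particular observing that the boundedness of $F_{ON}$ from Lemma \ref{lemma2}(ii) is precisely what prevents \eqref{H4}--\eqref{H5}, and therefore all the non-Neumann conditions, from being available for the Oren--Nayar model.
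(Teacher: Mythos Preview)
Your proposal is correct and follows exactly the paper's approach: the paper states the corollary as an immediate consequence of Proposition~\ref{prop2} and Theorem~\ref{theorem_strong_uniqueness} (together with the remarks on which hypotheses are needed for the state-constraints and Neumann variants), without writing out a detailed proof. Your write-up is simply a careful unpacking of that one-line justification, including the bookkeeping of matching each boundary condition to its required hypotheses and the standard passage from comparison to uniqueness.
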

We do not discuss here   the existence of a viscosity solution to the boundary value problems \eqref{Dirichlet}
and \eqref{general_boundary}. Since the SfS problem is an inverse problem in Computer Vision, we a priori know that, if  we  prescribe the correct  boundary conditions, the solution $u$ is given by the height of the surface to be reconstructed.   Moreover, thanks to the Maximum Principle for discontinuous viscosity solutions, see Theorem \ref{theorem_strong_uniqueness}, it is possible to get existence of a solution by proving the convergence of the various schemes
considered in literature (see e.g. \cite{BCDFV12,DFS08}). Finally, the Maximum Principle is also a key ingredient to get existence of a solution via Perron's method, once a subsolution  and a supersolution of the problem have been constructed (see \cite{CIL}).\par
We conclude with some remarks about the different boundary conditions in Shape-from-Shading  theory. The choice of the boundary condition
obviously depends on the chosen model and on available data. But, as observed in \cite{PF05}, the advantage of the state constraints boundary condition with respect to the   Dirichlet and Neumann boundary conditions is that it does not require any a priori   information about the solution at the boundary.
Nevertheless, the  approximation of  this boundary condition can be difficult. In the next lemma we show that the solution of a Dirichlet problem with a large constant as boundary data and the solution of state constraints  problem are upper  bounded by the same constant $M$ (independent of the solution).
Therefore, in this case, by the very definition of  boundary conditions in weak sense,  the Dirichlet supersolution condition    reduces to the state constraint one \eqref{state_constraints}.
 It follows that to approximate a problem with state constraints boundary condition is sufficient to consider the corresponding problem  with Dirichlet  boundary datum given by a constant  $M$ sufficiently large.
 This request is not a strong constraint difficult to occur since, for example, if the input image contains an object of interest in front of a background, the condition is satisfied in a neighbourhood of the object where $u(x)$ increases rapidly. 
\begin{lemma}\label{lemma3}
For $H(x,u,p)$ equal to either  $H^{L}$,  $H^{PH}$, or $H^{BP}$,
if the intensity image $\cI$  in \eqref{H_reflectance}   verifies
\[ \cI(x)\ge \delta > 0,\quad \forall x \in \Omega,\]
then the solution of the Dirichlet problem \eqref{Dirichlet} with an appropriately  large constant as boundary data
and the solution of the problem \eqref{general_boundary} with state constrains conditions  are upper bounded by the same constant (dependent only on the data of the problem).
\end{lemma}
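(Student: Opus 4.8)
The plan is to produce an explicit \emph{constant supersolution} of the Hamilton--Jacobi equation \eqref{HJ}, whose value $\bar M$ depends only on $\delta$, $f$ and the model constants, and then to invoke the Maximum Principle for discontinuous viscosity solutions to bound from above, by $\bar M$, the solution of each of the two boundary value problems at once.

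First I would use that $H$ has the form $H(x,r,p)=-e^{-2r}+\cI(x)f^2F_M(W(x,p))$ (see \eqref{HJ1}) with $M\in\{L,PH,BP\}$, that $W(x,p)\ge f^2|p|^2\ge0$ by Lemma \ref{lemma1}, and that $F_M$ is positive and strictly increasing on $[0,+\infty)$ by Lemma \ref{lemma2}(i), so that $F_M(W(x,p))\ge F_M(0)>0$ for every $(x,p)\in\overline\Omega\times\R^2$. Exploiting the hypothesis $\cI(x)\ge\delta>0$ on $\Omega$ — hence, by continuity of $\cI$, on $\overline\Omega$ — I would set
\[
\bar M:=-\frac12\ln\bigl(\delta f^2F_M(0)\bigr),
\]
so that $e^{-2\bar M}=\delta f^2F_M(0)$. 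Then, for every $x\in\overline\Omega$ and $p\in\R^2$,
\[
H(x,\bar M,p)=\cI(x)f^2F_M(W(x,p))-\delta f^2F_M(0)\ \ge\ 0 ,
\]
since $\cI(x)\ge\delta$ and $F_M(W(x,p))\ge F_M(0)$. Thus the constant function $w\equiv\bar M$ satisfies $H(x,\bar M,p)\ge0$ on $\overline\Omega\times\R^2$; in particular it is a classical, hence a viscosity, supersolution of \eqref{HJ} in $\Omega$. This is exactly the point at which the strengthened hypothesis $\cI\ge\delta$ on \emph{all} of $\Omega$ — rather than only on $\Gamma$, as in \eqref{H_lipsch} — is used.

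Next I would verify that $w\equiv\bar M$ is a viscosity supersolution, in the sense of Definition \ref{discvisco_sol}, of the two boundary value problems at hand. For the state constraints problem this is immediate: at a boundary point the supersolution inequality $\max[H(x_0,w(x_0),\nabla\phi(x_0)),+\infty]\ge0$ holds trivially, and in $\Omega$ it is the supersolution property of \eqref{HJ} just established. For the Dirichlet problem \eqref{general_boundary} with $B(x,r,p)=r-M$, $M$ being the chosen (large) constant datum, at a boundary point $x_0$ the condition reads $\max[H(x_0,\bar M,\nabla\phi(x_0)),\,\bar M-M]\ge0$, which holds because $H(x_0,\bar M,q)\ge0$ for \emph{every} $q\in\R^2$, so no restriction on the test gradient $\nabla\phi(x_0)$ is required; in $\Omega$ it is again the supersolution property of \eqref{HJ}. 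This boundary verification requires a little attention but is routine.

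Finally, let $u$ be the (continuous) viscosity solution of either the state constraints problem or the Dirichlet problem with datum $M$; then $u$ is in particular a viscosity subsolution of the corresponding problem, while $w\equiv\bar M$ is a supersolution of it. Since $H\in\{H^{L},H^{PH},H^{BP}\}$, Propositions \ref{prop} and \ref{prop2}(i) provide the assumptions (H1)--(H5), so the Maximum Principle for discontinuous viscosity solutions — Theorem \ref{theorem_strong_uniqueness} together with Corollary \ref{SfS_strong_uniqueness}(i) — applies and yields $u\le w=\bar M$ on $\overline\Omega$. Hence both solutions are bounded above by the same constant $\bar M$, which depends only on $\delta$, $f$ and the model constants, as claimed. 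There is no real obstacle here; the one subtlety to keep in mind is the reading of ``Dirichlet problem with a large constant as boundary data'': for $M>\bar M$ it must be understood in the weak (viscosity) sense of \eqref{general_boundary}, since the relaxed solution then satisfies $u\le\bar M<M$ and does not attain the datum pointwise on $\partial\Omega$; this is precisely why, in that regime, the Dirichlet supersolution condition collapses onto the state constraints condition \eqref{state_constraints}, as announced before the statement.
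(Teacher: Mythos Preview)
Your proof is correct and follows essentially the same approach as the paper: construct a constant supersolution $\bar M$ by ensuring $H(x,\bar M,p)\ge 0$ for all $(x,p)$, and then apply the Maximum Principle from Theorem \ref{theorem_strong_uniqueness} and Corollary \ref{SfS_strong_uniqueness}. The only difference is one of presentation: the paper handles $H^L$, $H^{PH}$, $H^{BP}$ separately with explicit constants $M^L=-\tfrac12\ln(\delta f^2)$ and $M^{PH}=M^{BP}=-\tfrac12\ln\bigl(\delta f^2/(k_D+k_S)\bigr)$, whereas you treat all three at once via $\bar M=-\tfrac12\ln\bigl(\delta f^2 F_M(0)\bigr)$ and the monotonicity of $F_M$ from Lemma \ref{lemma2}(i); since $F_L(0)=1$ and $F_{PH}(0)=F_{BP}(0)=1/(k_D+k_S)$, these constants coincide.
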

\begin{proof}
For $H(x,u,p)$ equal to $H^{L}$, let us define $M^L = - \frac{1}{2}\ln (\delta f^2)$. Then, we have
\[
H^L(x,M^L,0) = -e^{-2M^L} + I(x) f^2 \ge 0 \qquad \forall x\in\Omega.
\]
Therefore, the constant function $u(x)\equiv M^L$ is a supersolution of \eqref{Dirichlet} in $\Omega$.
Moreover, $\forall x \in \partial\Omega$ and $\forall \xi \in \R^2$, we have
\begin{equation}
H^L(x,M^L,\xi) = -e^{-2M^L} + I(x) f^2 \sqrt{W(x,\xi) + 1} \ge -e^{-2M^L} + I(x) f^2 \ge 0,
\end{equation}
which implies that $u(x)\equiv M^L$ is a supersolution of \eqref{HJ} on the boundary and therefore, see \eqref{state_constraints}, a supersolution of the state constraint boundary problem in $\overline \O$. Hence, by Theorem \ref{theorem_strong_uniqueness}  and Corollary \ref{SfS_strong_uniqueness}, the
solutions of the Dirichlet and state constraints problems with Hamiltonian $H^L$ are upper bounded by the same constant $M^L$.

We now prove the same result for the Phong model. Let us define $M^{PH} = - \frac{1}{2}\ln (\frac{\delta f^2}{k_D + k_S})$.\\
We have
\begin{eqnarray}
H^{PH}(x,M^{PH},0) &=& -e^{-2M^{PH}} + \frac{(I(x) - k_A I_A(x) ) f^2}{k_D + k_S} 	\\
&=& -\frac{\delta f^2}{k_D + k_S} + \frac{(I(x) - k_A I_A(x) ) f^2}{k_D + k_S}
\end{eqnarray}
that is non-negative since $\cI(x) = (I(x) - k_A I_A(x) ) \ge \delta > 0, \,\, \forall x \in \Omega$. \\
Therefore, the constant function $M^{PH}$ is a supersolution of \eqref{general_boundary} in $\Omega$ for the PH--model.
Moreover, $\forall x \in \partial\Omega$ and $\forall \xi \in \R^2$, we have \\
for $W(x,p) > 1$,
\begin{eqnarray*}
H^{PH}(x,M^{PH},\xi) &= &   -e^{-2M^{PH}} + (I(x) - k_A I_A(x) ) f^2 \sqrt{W(x,\xi) + 1}\\
                            &\ge& -e^{-2M^{PH}} + (I(x) - k_A I_A(x) ) f^2 \ge 0;
\end{eqnarray*}
for  $0 \le W(x,p) \le 1$,
\begin{eqnarray*}
H^{PH}(x,M^{PH},\xi) &=& -e^{-2M^{PH}}\\
   & +& (I(x) - k_A I_A(x) ) f^2 \frac{(W(x,\xi)+1)^{\alpha + 1/2}}{k_D (W(x,\xi)+1)^{\alpha} + k_S (W(x,\xi)+1)^{1/2} (1-W(x,\xi))^{\alpha}}  \\
   &\ge&  -e^{-2M^{PH}} + \frac{(I(x) - k_A I_A(x) ) f^2}{k_D + k_S}  \ge 0.
\end{eqnarray*}
So,  the constant function $M^{PH}$ for the PH--model is a supersolution of \eqref{general_boundary} in $\overline\Omega$ for both the
Dirichlet problem with boundary datum $M^{PH}$ and the state constraints one.

For the BP--model, we can consider the same constant used for the PH--model and repeat the same steps to prove the Lemma.
\end{proof}\qed


\section{Conclusions}\label{sec:conclusions}
In this paper we have shown the well-posedness of several non-Lambertian models in an analytical way thanks to the introduction of an attenuation term, under a perspective projection with a unique light source located at the optical center of the camera. 
We have derived the Hamilton-Jacobi equations associated to each reflectance model and then we have obtained a  unified   formulation showing useful properties for the general Hamiltonian. We have clarified the differences between the various models from the point of view of the problem with Dirichlet boundary conditions (BC), Neumann BC and state constraints BC. This analysis explains (and corrects) in details the empirical deductions coming from previous works on non-Lambertian models (see e.g. the conclusion reported in \cite{AF06} regarding the concave/convex ambiguity for the ON-model), focusing the attention on the important role of the boundary conditions
and providing also an useful theoretical tool for the numerical point of view thanks to the last Lemma \ref{lemma3}.\\
It is also worth noting that, from a theoretical point of view, contrary to previous works (see e.g. \cite{DO94,OD93,HB89,CDD03}),  we do not require  any additional information for the models and we do not  need to add regularization terms to the PDEs for ensuring the well-posedness of the SfS problem. 
From a numerical point of view,  the unified approach allows  to consider a unique and general numerical scheme that incorporates all the different SfS   models described in the paper  (and possibly  other non-Lambertian models not considered here), thereby providing  an easy way to   compare the performances of the different models by only changing some parameters.

\appendix
\section{Proofs of lemmas in Subsection \ref{sec:unified_formulation} } \label{App:AppendixA}
\begin{Proofc}{Proof of Lemma \ref{lemma1}.}
We have
\[f^2|p|^2\le W(x,p)=|p|^2 W(x,\frac{p}{|p|})\le |p|^2\max_{|q|=1} W(x,q)\]
which proves \eqref{hp:W_bounded}. Moreover,
\begin{align*}
    D_x W(x,p)=2(p\cdot x)p\frac{|x|^2+f^2}{f^2}+\Big[(p\cdot x)^2 + f^2 |p|^2\Big] \frac{2x}{f^2} \\
    D_p W(x,p)=2(f^2 + |x|^2) p+2\frac{|x|^2+f^2}{f^2}(x\cdot p)x
\end{align*}
from which we get  \eqref{hp:Dx_W_bounded} and \eqref{hp:Dp_W_bounded}.
\end{Proofc}
\begin{Proofc}{Proof of Lemma \ref{lemma2}.}\\
(i)
For the function $F_L$ we have
\begin{eqnarray*}
   \lim_{r\to+\infty} F_L(r) = \lim_{r\to+\infty}  \sqrt{r+1} =  +\infty,  
    F'_{L} =\frac{1}{2\sqrt{r+1}} \quad  \textrm{for $r\in [0,+\infty)$,} 
\end{eqnarray*}
and the corresponding properties in the statement easily follow.\\
Regarding the PH--model, we analyze the function $ F_{PH}(r)$ only for    $r\in [0,1)$ (otherwise it is equal to $\displaystyle \frac{F_L(r)}{k_D}$).
For $r\in [0,1)$, we have
\begin{equation*} 
\begin{split}
   F'_{PH}(r) =\frac{1}{[k_D(r+1)^\alpha+k_S(r+1)^{1/2}(1-r)^\alpha]^2}\cdot\\
   \Big[(\frac{1}{2}+\alpha)\Big(k_D(r+1)^{2\alpha-1/2}+k_S(r+1)^{\alpha} (1-r)^{\alpha}\Big)-\alpha k_D(r+1)^{2\alpha-1/2}\\
 -k_S\Big(\frac{1}{2}(r+1)^\alpha(1-r)^\alpha-\alpha(r+1)^{\alpha+1}(1-r)^{\alpha-1}   \Big)\Big]  \\
 = \frac{1}{[k_D(r+1)^\alpha+k_S(r+1)^{1/2}(1-r)^\alpha]^2}\cdot
   \Big[\frac{1}{2} k_D(r+1)^{2\alpha-1/2} + 2 \alpha k_S(r+1)^{\alpha} (1-r)^{\alpha-1}\Big].
   \end{split}
\end{equation*}
Hence,
\begin{eqnarray*}
0 < F'_{PH}(r) &\le&  \frac{1}{k^2_D(r+1)^{2 \alpha}} \Big[\frac{1}{2} k_D(r+1)^{2\alpha-1/2} + 2 \alpha k_S(r+1)^{\alpha}\Big] \\
& = & \frac{1}{2 k_D \sqrt{r+1}} + \frac{2\alpha k_S}{k^2_D (r + 1)^{\alpha}}\\
 & \le &  \Big( \frac{1}{2 k_D} + \frac{2\alpha k_S}{k^2_D} \Big) \frac{1}{\sqrt{r + 1}}.
\end{eqnarray*}
Moreover,
\begin{eqnarray*}
   \lim_{r\to1^-} F'_{PH}(r) =   \lim_{r\to1^+} F'_{PH}(r) = \frac{1}{k_D }F'_L(1),		
\end{eqnarray*}
hence $F_{PH} \in C^1$.
By the corresponding properties for $F_L$,  it follows that $F_{PH}$ is strictly increasing and the following bounds hold
\begin{align*}
   \text{$0<F'_{PH}(r) \le C\frac{1}{\sqrt{r+1}} \quad$ for $r\in[0,+\infty)$ and $\forall \alpha \ge 1$.}   \label{Bound_der_F_PH}
\end{align*}
For the BP--model,
if   $c=1$ in \eqref{F_BP}, then  $F_{BP}(r) = \frac{1}{k_D + k_S} F_L(r)$, hence we can refer to $F_L$.
In the case $c > 1$, the function $F_{BP}$ is still smooth and positive. Moreover,
\begin{equation*}
\lim_{r\to \infty} F_{BP} (r)=+\infty
\end{equation*}
and
\begin{eqnarray*}
    F'_{BP}(r) &=& \frac{\frac{c}{2} (r+1)^{\frac{c}{2} -1} (k_D(r+1)^{\frac{c-1}{2}} +k_S) -(r+1)^{\frac{c}{2}} (\frac{c-1}{2} k_D (r+1)^{\frac{c-1}{2} -1})}{(k_D (r+1)^{\frac{c-1}{2}} +k_S)^2}  \\
    &\le& \frac{\frac{c}{2} (r+1)^{\frac{c}{2} -1} (k_D(r+1)^{\frac{c-1}{2}} +k_S) }{(k_D (r+1)^{\frac{c-1}{2}} +k_S)^2} =
      \frac{\frac{c}{2} (r+1)^{\frac{c}{2} -1}}{k_D (r+1)^{\frac{c-1}{2}} +k_S} \, .
\end{eqnarray*}
Hence,   the following bounds hold
\begin{align*}
   \text{$0<F'_{BP} (r) \le C\frac{1}{\sqrt{r+1}} \quad$ for $r\in[0,+\infty)$ and $\forall c \ge 1$.}   \label{Bound_der_F_BP}
\end{align*}
This shows that also the function $F_{BP}$ is strictly increasing, that concludes the proof of (i).\\
(ii)
For the function $F_{ON}$ we have
\begin{equation*}
   F'_{ON}(r)=\frac{A\sqrt{r+1}-2B}{2[A\sqrt{r+1}+Br]^2}\quad \textrm{for $r\in [0,+\infty)$.}		\label{der_F_ON}
 \end{equation*}
Hence, $F_{ON}$ is strictly increasing if $A/2 > B$ and the property \eqref{Bound_der_F_ON} in the statement easily follows. Moreover, since $\lim_{r\to+\infty} F_{ON}(r) =1/B$, it is bounded.
\end{Proofc}

\bibliographystyle{plain}
\bibliography{references_2}

\end{document}